\PassOptionsToPackage{dvipsnames,svgnames,x11names}{xcolor}

\documentclass[12pt,reqno]{amsart}

\usepackage{amsmath}
\usepackage{amssymb}
\usepackage{amsthm}
\usepackage{bbm}
\usepackage{mathtools}
\usepackage{nicefrac}

\usepackage[main=english,brazilian]{babel}
\usepackage[T1]{fontenc}
\usepackage[utf8]{inputenc} 

\usepackage{a4wide}
\usepackage{microtype}

\usepackage[all]{xy}
\usepackage{tikz-cd}

\usepackage{aligned-overset}
\usepackage{enumitem}
\usepackage{multicol}
\usepackage{scalefnt}
\usepackage{xspace}

\usepackage{fixme}
\usepackage{url}

\usepackage{xcolor}
\usepackage{hyperref}
\usepackage{cleveref} 

\newtheorem{theorem}{Theorem}[section]
\newtheorem{proposition}[theorem]{Proposition}
\newtheorem{lemma}[theorem]{Lemma}
\newtheorem{corollary}[theorem]{Corollary}
\theoremstyle{definition}
\newtheorem{definition}[theorem]{Definition}
\newtheorem{remark}[theorem]{Remark}

\newcommand*{\C}{\mathbb C}
\newcommand*{\Z}{\mathbb Z}
\newcommand*{\B}{\mathcal B}
\newcommand*{\Hh}{\mathcal H}
\newcommand*{\cL}{\mathcal L}
\newcommand*{\Star}{$^*$\nobreakdash}
\newcommand*{\id}{\operatorname{id}}
\newcommand*{\Pic}{\operatorname{Pic}}
\newcommand*{\dom}{\operatorname{dom}}

\title{Spectral geometry of saturated Fell bundles}
\author{Stefan Wagner}
\address{Blekinge Institute of Technology, Department of Mathematics,
Karlskrona, Sweden}
\email{stefan.wagner@bth.se}
\subjclass[2020]{Primary 46L87; Secondary 46L55, 46L80, 58B34}
\keywords{Fell bundle, saturation, spectral triple, factor system, Dirac operator}

\begin{document}

\begin{abstract}
Let $\B=(B_g)_{g\in G}$ be a saturated Fell bundle over a countable discrete group, equipped with a spectral triple.
We give explicit sufficient conditions for lifting this triple to the reduced cross-sectional C\Star-algebra $C_r^*(\B)$.
Using finite projective realizations of the homogeneous Morita equivalence $B_e$-bimodules, we transport the base Dirac operator to the individual fibres and assemble the resulting operators into a fibrewise Dirac operator.
We introduce a spectral-regularity condition ensuring bounded commutators and give a sufficient criterion in terms of uniform derivative bounds on smooth factor-system data.
Combining the fibrewise operator with a length operator associated with a proper translation-bounded matrix-valued length function yields the lifted spectral triple.
When $B_e$ is separable, the lifted triple, with a suitable grading, represents the Kasparov product of the vertical class with the class of the unit-fibre triple.
We also establish a summability criterion and sufficient conditions for stability under central cocycle twisting.
A range of examples illustrates the scope of the construction, including twisted crossed products, free compact Abelian actions, and Fell bundles with nonfree homogeneous fibres.
\end{abstract}

\maketitle

\section{Introduction}

Fell bundles provide a flexible framework for decomposing C\Star-algebras into homogeneous fibres. 
Over discrete groups, their cross-sectional algebras form a broad class of graded C\Star-algebras encompassing group C\Star-algebras, crossed products, twisted crossed products, and many naturally graded examples; see~\cite{Ex17,Fell1,Fell2}. 
In the saturated case, the homogeneous fibres interact through multiplication in the strongest possible sense: for all $g,h\in G$, the linear span of
$B_gB_h$ is dense in $B_{gh}$. 
In particular, each fibre $B_g$ is a Morita equivalence $B_e$-bimodule. 
This raises the natural question of whether geometric data carried by the unit fibre can be transported along the homogeneous fibres and assembled into geometric data on the cross-sectional algebra.

Spectral triples provide a natural analytic framework for encoding noncommutative geometric data. 
Modelled on the Dirac operator of a compact Riemannian spin manifold, they capture first-order differential structure through commutators with the algebra, while spectral properties of the Dirac operator, such as compact resolvent and summability, reflect global geometric features. 
They therefore provide a natural setting in which to ask whether geometric data on the unit fibre can be transported to the cross-sectional algebra.

Spectral-geometric constructions have been developed for a wide range of operator-algebraic structures; see, for example,~\cite{AGI17,AnGuIsRu22,BeMaRe10,Cac23,CacMes19,GaGr13,GoffengMesland2018,Ka25,KaSui20,SchWa20,SiVe15} and the references therein.
Of particular relevance here are spectral triples on group C\Star-algebras, crossed products and twisted crossed products, and noncommutative principal bundles arising from free compact group actions; see, for example,~\cite{AnGuIsRu22,Cac23,CacMes19,SchWa17} and the references therein. 
Related constructions have also been studied for graph C\Star-algebras and other naturally graded C\Star-algebras; see, for example,~\cite{GoffengMesland2018,PaskRennie2006} and the references therein.

A recurring principle in these constructions is that spectral data on a distinguished~subalgebra are combined with geometric information associated with a group action or grading. 
For crossed products by discrete groups, this additional information can be encoded by a length function on the group, while for compact group actions it is supplied by the geometry of the acting group.

For Fell bundles, Masters constructed unbounded Kasparov modules for fissured Fell bundles over locally compact groups using proper translation-bounded matrix-valued weights; see~\cite{MastersThesis}.
In the saturated case considered here, her construction provides the vertical length operator used below.
Moreover, in~\cite[Sec.~II.4, especially Thm.~II.4.6]{MastersThesis}, she studies Fell bundles over group extensions and, under suitable compatibility assumptions, combines weights on the normal subgroup and the quotient to obtain an unbounded representative of the Kasparov product through the corresponding partial cross-sectional algebra.

The present article considers a related lifting problem, starting from a given spectral triple on the unit fibre of a saturated Fell bundle over a countable discrete group.
Its focus is the explicit transport of the base operator through finite projective realizations of the homogeneous fibres and the formulation of factor-system conditions ensuring bounded horizontal commutators.
These conditions allow the resulting fibrewise operator to be combined with the vertical length operator to obtain a spectral triple on the reduced cross-sectional algebra.

More precisely, let $\B=(B_g)_{g \in G}$ be a saturated Fell bundle over a countable discrete group, with unital unit fibre $B := B_e$, and let $\mathbb D_B=(B_0,\pi_B,\Hh_B,D_B)$ be a spectral triple on $B$. 
The construction proceeds as follows.

The starting point is an isometric characterization of saturation (Lemma~\ref{lem:isometric-saturation}).
More precisely, $\B$ is saturated if and only if, for each $g \in G$, there exist a finite-dimensional Hilbert space $H_g$ and an isometry
\[
    s(g) \in B_{g^{-1}}\otimes H_g,
    \qquad
    s(g)^* s(g) = 1_B.
\]
Fix such a family, with $H_e = \C$ and $s(e) = 1_B$. 
For each $g \in G$, the projection
\[
    p(g) := s(g)s(g)^* \in B \otimes \cL(H_g)
\]
gives a realization of the homogeneous fibre $B_g$ as the finitely generated projective $B$-module
\[
    p(g)(B \otimes H_g).
\]
These projective realizations (Corollary~\ref{cor:projective-fibres}) provide the coordinates in which both the Fell-bundle structure and the lifted spectral data are described.

The chosen family determines factor-system data as follows. 
For each $g \in G$, one obtains a \Star-homomorphism
\[
    \gamma_g: B \rightarrow B \otimes \cL(H_g),
    \qquad
    \gamma_g(x) := s(g) x s(g)^*,
\]
while, for all $g,h \in G$, one obtains an operator
\[
    \omega(g,h)
    :=
    s(gh) s(g)^* s(h)^*
    \in
    B \otimes \cL(H_ g\otimes H_h,H_{gh}).
\]
The maps $\gamma_g$ encode the $B$-module structure of the individual fibres, while the operators $\omega(g,h)$ encode their multiplication.
If these factor-system data are smooth relative to~$B_0$, they determine dense homogeneous subspaces $B_g^0\subseteq B_g$, $g\in G$, and hence the dense \Star-subalgebra
\[
    C_c(\B)_0 := \bigoplus_{g\in G}^{\mathrm{alg}} B_g^0 \subseteq C_r^*(\B).
\]

Under the same smoothness assumption, the Dirac operator $D_B$ can be lifted to each homogeneous fibre. 
For each $g \in G$, let
\[
    W_g: p(g)(\Hh_B \otimes H_g) \rightarrow B_g \otimes_B\Hh_B
\]
be the unitary induced by the projective realization. 
Since $p(g) \in B_0 \otimes \cL(H_g)$, the compressed operator
\[
    p(g)(D_B \otimes \id)p(g)
\]
is self-adjoint with compact resolvent, and its transport through $W_g$ defines
\[
    D_g := W_g p(g)(D_B \otimes \id)p(g) W_g^*.
\]
The direct sum of these operators defines the fibrewise Dirac operator
\[
    D_{\mathrm{fib}} := \bigoplus_{g \in G} D_g
\]
on 
\[
    \Hh_{\B} := \ell^2(\B) \otimes_B \Hh_B.
\]

The principal analytic issue at this stage is the boundedness of commutators with $D_{\mathrm{fib}}$. 
For this purpose, a \emph{spectral-regularity} condition is introduced, expressing domain
invariance together with uniform boundedness of the fibrewise commutators.
A sufficient criterion can be formulated directly in terms of the factor-system data. 
For all $h \in G$ and $x \in B_0$, put
\[
    \partial\gamma_h(x)
    :=
    [D_B \otimes \id,\gamma_h(x)],
\]
and, for all $g,h \in G$, put
\[
    \partial\omega(g,h)
    :=
    (D_B \otimes \id) \omega(g,h)
    -
    \omega(g,h) (D_B \otimes \id).
\]
The factor-system criterion (Theorem~\ref{thm:factor-criterion}) shows that spectral regularity follows if
\[
    \sup_{h \in G} \| \partial\gamma_h(x) \| < \infty
\]
for all $x \in B_0$, and
\[
    \sup_{h\in G} \| \partial\omega(g,h) \| < \infty
\]
for all $g \in G$. 
Notably, these conditions are closely related to the factor-system conditions appearing in~\cite{SchWa20} in the setting of noncommutative principal bundles.

The contribution from the grading group is encoded by a proper
translation-bounded matrix-valued length function $\ell: G \rightarrow \cL(V)_{\mathrm{sa}}$, where $V$ is finite-dimensional. 
It determines a diagonal length operator $M_\ell$ on $\Hh_{\B} \otimes V$. 
The fibrewise Dirac operator and the length operator are then combined to form
\[
    D_{\B,\ell}
    :=
    D_{\mathrm{fib}} \otimes \id_V \otimes \sigma_1
    +
    M_\ell \otimes \sigma_2
\]
on $\Hh_{\B} \otimes V \otimes \C^2$, where $\sigma_1$ and $\sigma_2$ are the first two Pauli matrices. 
Properness of $\ell$~provides the decay needed for compact resolvent, while translation boundedness controls the commutators arising from the grading direction. 
Together with spectral regularity, this gives a spectral triple
(Theorem~\ref{thm:main})
\[
    \mathbb D_{\B,\ell}
    =
    \left(
        C_c(\B)_0,
        \pi_{\B} \otimes \id_V \otimes \id_{\C^2},
        \Hh_{\B} \otimes V \otimes \C^2,
        D_{\B,\ell}
    \right)
\]
on $C_r^*(\B)$.

The construction may therefore be viewed as combining two complementary directions. 
The matrix-valued length function gives rise to the vertical geometry studied by Masters~\cite{MastersThesis}, whereas the fibrewise operator constructed here transports the spectral geometry of the unit fibre through the Morita equivalence fibres. 

When $B$ is separable, the lifted spectral triple, equipped with a suitable grading, represents the Kasparov product of the vertical class defined by $M_\ell$ with the class of the spectral triple on the unit fibre
(Proposition~\ref{prop:kasparov-product}).

A summability criterion is obtained in terms of uniform summability bounds for the fibre operators $D_g$ and the growth of the matrix-valued length function (Proposition~\ref{prop:summability}).
The behaviour of the construction under central cocycle twisting is also considered.
Sufficient conditions are given under which spectral regularity is preserved by the twist, and hence the same construction produces a spectral triple on the reduced cross-sectional algebra of the cocycle-twisted Fell bundle
(Theorem~\ref{thm:cocycle-twist}).

For twisted crossed products, the resulting spectral triples recover the construction of~\cite{AnGuIsRu22}.
A further important class arises from free actions of compact Abelian groups.
If~$K$ is compact Abelian and acts freely on a C\Star-algebra, then its spectral subspaces form a saturated Fell bundle over the discrete dual group $\widehat K$.
In this setting, the fibrewise lifting procedure is closely related to the construction developed in~\cite{SchWa20}.
The Fell-bundle formulation recasts the relevant spectral data in terms of the homogeneous fibres and places both constructions in a common framework applicable to arbitrary countable discrete grading groups.

A concrete application is obtained from the Hopf line bundle over $S^2$, pulled back to $S^2\times S^1$ (Section~\ref{sec:free-group-example}).
Its tensor powers, indexed through a homomorphism $F_2\to\Z$, form a saturated Fell bundle over the non-Abelian free group $F_2$ with infinitely many pairwise nonisomorphic
homogeneous modules.
Compatible tensor-product connections give the uniform horizontal commutator estimates directly through the Leibniz rule.
A signed word-length function then yields a lifted spectral triple.

This example also clarifies the relation between the analytic hypotheses used in the paper.
Spectral regularity holds, while the uniform derivative bounds in the factor-system criterion fail: such bounds would uniformly bound the curvatures of the rank-one Grassmann connections,
contradicting the unbounded Chern numbers of the tensor powers.
Thus the factor-system criterion is sufficient but not necessary.

Further examples include Fell line bundles, finite Fell bundles, free compact Abelian actions, quantum tori, the quantum Hopf fibration, and the gauge grading of the Cuntz algebras.

\subsection*{Organization of the paper}

Section~\ref{sec:prelim} recalls the necessary background on Fell bundles, reduced cross-sectional C\Star-algebras, spectral triples, and Dirac operators on finitely generated projective modules.

Section~\ref{sec:isometric-factor-systems} develops the structure of saturated Fell bundles used throughout the paper. 
Saturation is characterized in terms of finite-dimensional isometries, the homogeneous fibres are realized as finitely generated projective modules, and these realizations are organized into factor-system data.

Section~\ref{sec:lifts} develops the fibrewise lifting of the spectral data on the unit fibre. 
The smooth algebra, the lifted representation, and the fibrewise Dirac operator are constructed, and spectral regularity is introduced together with a criterion formulated directly in terms of the factor-system data.

In Section~\ref{sec:construction}, a matrix-valued length function on the grading group is used to construct the length operator, which is combined with the fibrewise Dirac operator to obtain the lifted Dirac operator.
The main Fell-bundle lifting theorem is established, followed by a Kasparov-product interpretation and a summability criterion.

Section~\ref{sec:cocycle-twisting} studies central cocycle twists and gives conditions under which the lifted spectral triple is preserved.

Finally, Section~\ref{sec:examples} illustrates the construction through trivial Fell bundles, twisted crossed products, Fell line bundles, finite Fell bundles, and a free-group
example with nonfree fibres.
It also discusses free compact Abelian actions, quantum tori, the quantum Hopf fibration, and the gauge grading of the Cuntz algebras.

\section{Preliminaries}\label{sec:prelim}

Throughout, $G$ denotes a countable discrete group with identity element $e$.

For a subset $X$ of a Banach space, we denote its closed linear span by $[X]$.

All Hilbert modules are right Hilbert modules.  
For a Hilbert $B$-module $E$, we write $\cL_B(E)$ and $\mathcal K_B(E)$ for the adjointable and compact operators, respectively.

\subsection{Fell bundles over discrete groups}

\begin{definition}[{\cite[Chap.~VIII, Sec.~16.2]{Fell2}}]
A \emph{Fell bundle} over $G$ is a family $\B=(B_g)_{g \in G}$ of Banach spaces, called the \emph{fibres} of $\B$, equipped with
\begin{itemize}
    \item bilinear maps $m_{g,h}: B_g \times B_h \to B_{gh}$, $(x,y) \mapsto xy$ for all $g,h \in G$, and 
    \item involution maps ${}^*: B_g \to B_{g^{-1}}$, $x \mapsto x^*$ for all $g \in G$,
\end{itemize}
satisfying the usual C\Star-algebraic bundle axioms.

A Fell bundle $\B=(B_g)_{g\in G}$ is called \emph{saturated} if $[B_gB_h] = B_{gh}$ for all $g,h \in G$.
\end{definition}

Let $\B = (B_g)_{g\in G}$ be a Fell bundle and put $B := B_e$.
Then $B$ is a C\Star-algebra, called~the \emph{unit fibre}.
For each $g \in G$, the fibre $B_g$ is a Hilbert $B$-bimodule with respect to the
canonical left and right $B$-valued inner products
\[
    {}_B\langle x,y\rangle := xy^*
    \qquad
    \text{and}
    \qquad
    \langle x,y\rangle_B := x^*y.
\]
For all $g,h \in G$, the bilinear map $m_{g,h}$ induces an isometric $B$-bimodule map
\[
    m_{g,h}: B_g\otimes_B B_h \rightarrow B_{gh},
    \qquad
    x \otimes y \mapsto xy,
\]
whose range is $[B_gB_h]$.
If $\B$ is saturated, then $m_{g,h}$ is unitary for all $g,h \in G$.
Consequently, each fibre $B_g$ is a Morita equivalence $B$-bimodule.

Let $C_c(\B)$ denote the space of finitely supported sections of $\B$, which may be identified with the algebraic direct sum 
\[
    C_c(\B) = \bigoplus_{g \in G}^{\mathrm{alg}} B_g.
\]
For $\xi \in C_c(\B)$ and $g \in G$, let $\xi_g \in B_g$ be its $g$-component. 
Then $C_c(\B)$ becomes a \Star-algebra with multiplication and involution defined, for $\xi,\eta \in C_c(\B)$, by
\[
    (\xi\eta)_g := \sum_{h \in G} \xi_h \eta_{h^{-1}g}
    \qquad
    \text{and}
    \qquad
    (\xi^*)_g := \xi_{g^{-1}}^*
\]
for all $g \in G$.
Equip $C_c(\B)$ with the $B$-valued inner product
\[
    \langle \xi,\eta\rangle_B := \sum_{g \in G} \xi_g^* \eta_g.
\]
Its completion is the Hilbert $B$-module
\begin{equation}\label{eq:module-decomposition}
    \ell^2(\B) := \bigoplus_{g \in G} B_g.
\end{equation}
Left convolution induces the regular representation
\[
    \Lambda_{\B}:
    C_c(\B) \rightarrow \cL_B(\ell^2(\B)).
    \qquad
    (\Lambda_{\B}(\xi)\eta)_g := \sum_{h \in G} \xi_h \eta_{h^{-1}g}.
\]
The \emph{reduced cross-sectional C\Star-algebra} of $\B$ is defined by
\[
    C_r^*(\B) := \overline{\Lambda_{\B}(C_c(\B))}^{\|\cdot\|}
    \subseteq
    \cL_B(\ell^2(\B)).
\]

\subsection{Spectral triples}

Let $C$ be a unital C\Star-algebra.
A~\emph{spectral triple} on $C$ is a quadruple $\mathbb D_C=(C_0,\pi_C,\Hh_C,D_C)$, where $C_0 \subseteq C$ is a dense unital \Star-subalgebra, $\pi_C: C \to \cL(\Hh_C)$ is a faithful unital \Star-representation on a Hilbert space $\Hh_C$, and $D_C$ is a possibly unbounded self-adjoint operator on $\Hh_C$ such that
\begin{enumerate}
    \item $D_C$ has compact resolvent,
    \item $\pi_C(C_0) \dom(D_C) \subseteq \dom(D_C)$,
    \item $[D_C,\pi_C(c)] \in \cL(\Hh_C)$ for all $c \in C_0$.
\end{enumerate}



Let $E$ be a finitely generated projective right Hilbert $C$-module and suppose that $E \cong pC^n$ for some projection $p \in M_n(C_0)$. 
Via the induced representation of $M_n(C)$ on $\Hh_C^n$, the commutator $[D_C \otimes \id,p]$ is bounded. 
It follows from~\cite[Lem.~2.1]{SchWa20} that
\[
    D_E := p (D_C \otimes \id) p
\]
is an unbounded self-adjoint operator on $p \Hh_C^n$ with domain $p \dom(D_C)^n$ and compact~resolvent.


\section{Saturated Fell bundles and factor systems}\label{sec:isometric-factor-systems}

A structural description of saturated Fell bundles used in the subsequent
construction is developed in this section.
The homogeneous fibres are first realized as finitely generated projective modules, and these realizations are then organized into factor-system data
encoding the Fell-bundle structure.

Let $\B=(B_g)_{g\in G}$ be a saturated Fell bundle with unital unit fibre
$B := B_e$.

\subsection{Saturation and projective fibres}

For each $g \in G$, there exist $n_g \in \mathbb N$ and elements
$x_1,\ldots,x_{n_g} \in B_g$ such that
\[
    \sum_{i=1}^{n_g} x_i x_i^* = 1_B;
\]
see, for example,~\cite[Lem.~A.1]{SchWa17}.

The existence of such finite families can be expressed equivalently in terms of isometries, which will provide the coordinates used throughout the construction:

\begin{lemma}[Saturation via isometries]\label{lem:isometric-saturation}
Suppose that $B$ is unital. The following assertions are equivalent:
\begin{enumerate}[label=(\alph*)]
    \item The Fell bundle $\B$ is saturated.
    \item For each $g\in G$, there exist a finite-dimensional Hilbert space $H_g$ and an isometry
    \[
        s(g) \in B_{g^{-1}} \otimes H_g.
    \]
\end{enumerate}
\end{lemma}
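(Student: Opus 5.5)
We prove the two implications separately. In both directions we identify $B_{g^{-1}}\otimes H_g$ with $B_{g^{-1}}^{n}$, where $n=\dim H_g$, after fixing an orthonormal basis $e_1,\dots,e_n$ of $H_g$. An element $s=\sum_i x_i\otimes e_i$ with $x_i\in B_{g^{-1}}$ then has
\[
s^*s=\sum_{i=1}^n x_i^*x_i\in B .
\]
Here $s^*$ is regarded as an element of $B_g\otimes \overline{H_g}$, and the product uses the Fell-bundle multiplication together with the inner product of $H_g$. With this convention, $s$ is an isometry exactly when $\sum_i x_i^*x_i=1_B$.

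For (a)$\Rightarrow$(b), fix $g\in G$. We apply the quoted fact \cite[Lem.~A.1]{SchWa17} to the group element $g^{-1}$ rather than $g$. This gives $y_1,\dots,y_n\in B_{g^{-1}}$ with $\sum_i y_iy_i^*=1_B$. Put $x_i:=y_i^*\in B_g$; then $\sum_i x_i x_i^*$ is not yet the right expression, so we instead apply the lemma at $g$ itself. That yields $z_1,\dots,z_n\in B_g$ with $\sum_i z_iz_i^*=1_B$. Set $x_i:=z_i^*\in B_{g^{-1}}$, so that $\sum_i x_i^*x_i=\sum_i z_iz_i^*=1_B$. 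Taking $H_g=\C^n$ and $s(g):=\sum_i x_i\otimes e_i$ gives the required isometry. If one prefers not to rely on the citation, the standard argument is as follows. Saturation gives $[B_gB_{g^{-1}}]=B$, so the ideal $\operatorname{span}\{xy^*: x,y\in B_g\}$ is dense in the unital algebra $B$. It therefore contains an invertible positive element of the form $\sum_i z_iz_i^*$. Replacing each $z_i$ by $a^{-1/2}z_i$, where $a=\sum_i z_iz_i^*$, produces the normalized family.

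For (b)$\Rightarrow$(a), we must show $[B_gB_h]=B_{gh}$ for all $g,h\in G$. Write $s(h^{-1})=\sum_i x_i\otimes e_i$ with $x_i\in B_h$ and $\sum_i x_i^*x_i=1_B$. Then every $b\in B_{gh}$ factors as
\[
b=b\cdot 1_B=\sum_i (bx_i^*)\,x_i .
\]
Here $bx_i^*\in B_{gh}B_{h^{-1}}\subseteq B_g$ and $x_i\in B_h$, so $b\in \operatorname{span} B_gB_h$. Hence $B_gB_h$ spans all of $B_{gh}$, and in particular $[B_gB_h]=B_{gh}$.

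The only delicate point is bookkeeping: in direction (a)$\Rightarrow$(b) one must apply the quoted lemma at the correct group element, so that the isometry condition $s^*s=1$ matches the normalization $\sum x_ix_i^*=1$ provided by the citation. Once the convention $s^*s=\sum_i x_i^*x_i$ is fixed, both directions are short computations.
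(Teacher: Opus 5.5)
Your proof is correct and follows the paper's argument. In (a)$\Rightarrow$(b) you apply the cited normalization $\sum_i z_iz_i^*=1_B$ at $g$ itself and take adjoints to land in $B_{g^{-1}}$, and in (b)$\Rightarrow$(a) you factor $b=\sum_i (bx_i^*)x_i$ using the isometry at $h^{-1}$, exactly as the paper does.

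Two minor points. Delete the false start at $g^{-1}$. In your optional citation-free remark, the step producing an invertible element $\sum_i z_iz_i^*$ needs one more line: a dense ideal in unital $B$ is all of $B$, so $1_B=\sum_i x_iy_i^*=XY^*$ for rows $X,Y$ over $B_g$, and then $1_B=XY^*YX^*\le\|Y^*Y\|\,XX^*$.
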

\begin{proof}
Suppose first that \textup{(a)} holds and fix $g \in G$. 
By the preceding observation, there exist $x_1,\ldots,x_n \in B_g$ such that
$\sum_{j=1}^n x_j x_j^* = 1_B$. 
Hence
\[
    s(g) := (x_1^*,\ldots,x_n^*)^{\mathsf T} \in B_{g^{-1}} \otimes \C^n
\]
satisfies $s(g)^* s(g) = \sum_{j=1}^n x_j x_j^* = 1_B$.
Thus \textup{(b)} holds.

Conversely, suppose that \textup{(b)} holds. 
Let $g,h \in G$ and $z \in B_{gh}$. 
Apply \textup{(b)} to $h^{-1}$ and let $n := \dim H_{h^{-1}}$. 
After choosing an orthonormal basis of $H_{h^{-1}}$, write
\[
    s(h^{-1}) = (y_1,\ldots,y_n)^{\mathsf T},
\]
where $y_j \in B_h$ for all $j=1,\ldots,n$. 
Since $s(h^{-1})$ is an isometry, $\sum_{j=1}^n y_j^* y_j = 1_B$. 
Hence
\[
    z = \sum_{j=1}^n (zy_j^*) y_j \in [B_gB_h].
\]
Thus $B_{gh} \subseteq [B_gB_h]$. 
The Fell-bundle axioms give the reverse inclusion, so $[B_gB_h] = B_{gh}$.
From this it follows that $\B$ is saturated.
\end{proof}

\begin{corollary}\label{cor:projective-fibres}
Suppose that $B$ is unital and that $\B$ is saturated. 
For each $g \in G$, choose an isometry $s(g) \in B_{g^{-1}} \otimes H_g$ as in Lemma~\ref{lem:isometric-saturation}, and put
\[
    p(g) := s(g) s(g)^* \in B \otimes \cL(H_g).
\]
Then, for each $g \in G$, $p(g)$ is a projection and
\begin{equation}\label{eq:fibre-decomposition}
    \Phi_g: p(g) ( B\otimes H_g) \rightarrow B_g,
    \qquad
    \Phi_g(\xi) := s(g)^* \xi,
\end{equation}
is a unitary isomorphism of Hilbert $B$-modules. 
In particular, $B_g$ is finitely generated and projective for each $g \in G$.
\end{corollary}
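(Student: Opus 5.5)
First I will check that $p(g)$ is a projection and make sense of the products involved. We regard $s(g)\in B_{g^{-1}}\otimes H_g$ as a column $(s_1,\dots,s_n)^{\mathsf T}$ with $s_j\in B_{g^{-1}}$, after fixing an orthonormal basis of $H_g$. Its adjoint $s(g)^*$ is the row $(s_1^*,\dots,s_n^*)$ with entries in $B_g$. Then $p(g)=s(g)s(g)^*$ has entries $s_is_j^*\in B_{g^{-1}}B_g\subseteq B$, so $p(g)\in B\otimes\cL(H_g)\cong M_n(B)$. It is self-adjoint, and
\[
p(g)^2=s(g)\bigl(s(g)^*s(g)\bigr)s(g)^*=s(g)1_Bs(g)^*=p(g).
\]
For $\xi\in B\otimes H_g$ (a column with entries in $B$), the element $s(g)^*\xi=\sum_j s_j^*\xi_j$ lies in $B_gB\subseteq B_g$. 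Hence $\Phi_g$ is well defined, and it is clearly right $B$-linear.

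Next I will show that $\Phi_g$ preserves inner products. For $\xi,\eta\in p(g)(B\otimes H_g)$ we have
\[
\langle\Phi_g\xi,\Phi_g\eta\rangle_B=(s(g)^*\xi)^*(s(g)^*\eta)=\xi^*s(g)s(g)^*\eta=\xi^*p(g)\eta=\xi^*\eta .
\]
The last equality uses $p(g)\eta=\eta$, and $\xi^*\eta$ is exactly the standard inner product on $B\otimes H_g$. So $\Phi_g$ is isometric, and in particular injective.

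For surjectivity, I will write down the candidate inverse $\Psi_g\colon B_g\to p(g)(B\otimes H_g)$, $\Psi_g(x):=s(g)x$. Its entries are $s_jx\in B_{g^{-1}}B_g\subseteq B$, and $p(g)s(g)x=s(g)s(g)^*s(g)x=s(g)x$, so $\Psi_g$ lands in the range of $p(g)$. We then get
\[
\Phi_g\Psi_g(x)=s(g)^*s(g)x=x, \qquad \Psi_g\Phi_g(\xi)=p(g)\xi=\xi .
\]
Thus $\Phi_g$ is a bijective inner-product-preserving module map, hence a unitary of Hilbert $B$-modules, with adjoint $\Psi_g$. Finally, $p(g)(B\otimes H_g)\cong p(g)B^n$ is finitely generated and projective, so $B_g$ is as well.

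There is no real obstacle here. The only point needing care is the bookkeeping of which fibre each matrix entry lives in, so that the products $s_is_j^*$ and $s_j^*\xi_j$ land in $B$ and $B_g$ by the Fell-bundle axioms $B_hB_k\subseteq B_{hk}$ and $B_h^*=B_{h^{-1}}$. Note that saturation is used only through the existence of the isometry $s(g)$, which Lemma~\ref{lem:isometric-saturation} provides.
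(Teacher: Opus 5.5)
Your proposal is correct and follows essentially the same route as the paper. You use the isometry relation to get that $p(g)$ is a projection, the identity $\Phi_g(s(g)x)=x$ for surjectivity, and the computation $\xi^*s(g)s(g)^*\eta=\xi^*p(g)\eta=\xi^*\eta$ for unitarity; in addition, you make the well-definedness bookkeeping and the explicit inverse $\Psi_g(x)=s(g)x$ explicit.
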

\begin{proof}
Fix $g\in G$. 
The isometry property of $s(g)$ gives $p(g)^* = p(g)$ and $p(g)^2 = p(g)$, so $p(g)$ is a projection.

Since $p(g)s(g)x = s(g)x$ and $\Phi_g(s(g)x) = x$ for all $x \in B_g$, the map $\Phi_g$ is surjective. 
Moreover, for all $\xi,\eta \in p(g)(B\otimes H_g)$,
\[
    \langle\Phi_g(\xi),\Phi_g(\eta)\rangle_B
    =
    \xi^* s(g) s(g)^* \eta
    =
    \xi^* p(g) \eta
    =
    \langle\xi,\eta\rangle_B.
\]
Thus $\Phi_g$ is unitary.
\end{proof}

\subsection{Factor-system data}

The projective realizations of the fibres obtained above can be organized
into factor-system data encoding both the $B$-module structure of the
individual fibres and the multiplication of the Fell bundle.
Fix, for each $g \in G$, a finite-dimensional Hilbert space $H_g$ and an
isometry
\[
    s(g) \in B_{g^{-1}} \otimes H_g
\]
as in Lemma~\ref{lem:isometric-saturation}, with $H_e = \C$ and $s(e) = 1_B$.
These choices give rise to \emph{factor-system data over $B$} describing
the fibres of $\B$ together with their multiplication and involution:

For each $g \in G$, define the \Star-homomorphism
\[
    \gamma_g: B \rightarrow B \otimes \cL(H_g),
    \qquad
    \gamma_g(x) := s(g) x s(g)^*.
\]
Note that $\gamma_g(1_B) = s(g) s(g)^* = p(g)$.

For all $g,h \in G$, define the operator
\[
    \omega(g,h) := s(gh) s(g)^* s(h)^* \in B \otimes \cL(H_g \otimes H_h,H_{gh}),
\]
where the isometries are amplified in the natural way. 

For each $g \in G$ and finite-dimensional Hilbert spaces $K$ and $L$, the map $\gamma_g$ is extended entrywise to
\[
    \gamma_g: B \otimes \cL(K,L) \rightarrow B \otimes \cL(K \otimes H_g,L \otimes H_g).
\]
The operators $\omega(g,h)$ are likewise amplified by tensoring with identity operators on auxiliary Hilbert-space factors whenever necessary. 
With these conventions,
\[
    \omega(g,h) \omega(g,h)^* = p(gh)
    \qquad
    \text{and}
    \qquad
    \omega(g,h)^* \omega(g,h) = \gamma_h(p(g))
\]
for all $g,h \in G$. 
Moreover,
\begin{equation}\label{eq:factor-compatibility}
    \gamma_{gh}(x) \omega(g,h)
    =
    \omega(g,h) \gamma_h(\gamma_g(x))
\end{equation}
for all $g,h \in G$ and $x \in B$, and
\begin{equation}\label{eq:factor-cocycle}
    \omega(g,hk) (1_{H_g} \otimes \omega(h,k))
    =
    \omega(gh,k) \gamma_k(\omega(g,h))
\end{equation}
for all $g,h,k \in G$. 
The identities~\eqref{eq:factor-compatibility} and~\eqref{eq:factor-cocycle} are referred to as the \emph{coaction identity} and the \emph{cocycle identity}, respectively.

The maps $\gamma_g$, $g \in G$, and the operators $\omega(g,h)$, $g,h \in G$, determine the algebraic operations on the fibres in the following coordinates:

For each $g \in G$, define
\[
    a_g: B \otimes H_g \rightarrow B_g,
    \qquad
    a_g(x \otimes \eta) := s(g)^* (x \otimes \eta).
\]
The map $a_g$ is surjective, since $a_g(s(g) x) = x$ for all $x \in B_g$. 
For all $g,h \in G$, $x,y \in B$, $\eta \in H_g$, and $\vartheta \in H_h$, one has
\begin{equation}\label{eq:factor-multiplication}
    a_g(x \otimes \eta) a_h(y \otimes \vartheta) = a_{gh}(\omega(g,h) \gamma_h(x)_{13} (y \otimes \eta \otimes \vartheta)),
\end{equation}
where $\gamma_h(x)_{13}$ acts on the $B$- and $H_h$-components and trivially on $H_g$.

For each $g \in G$, define the antilinear map
\[
    J_g: B \otimes H_g \rightarrow B \otimes H_{g^{-1}},
    \qquad
    J_g(\xi) := s(g^{-1}) a_g(\xi)^*.
\]
Then $a_g(\xi)^* = a_{g^{-1}}(J_g(\xi))$ for all $g \in G$ and $\xi \in B \otimes H_g$.

For $g \in G$ and $\eta \in H_g$, let $\iota_\eta: H_{g^{-1}} \rightarrow H_g \otimes H_{g^{-1}}$, $\vartheta \mapsto \eta \otimes \vartheta$.
Then
\begin{equation}\label{eq:J-factor-system}
    J_g(x \otimes \eta) = \gamma_{g^{-1}}(x^*) (\id_B \otimes \iota_\eta^*) \omega(g,g^{-1})^*
\end{equation}
for all $g \in G$, $x \in B$, and $\eta \in H_g$.

\section{Fibrewise lifting of spectral data}\label{sec:lifts}

Let $\B=(B_g)_{g\in G}$ be a saturated Fell bundle with unital unit fibre $B:=B_e$, and let $\mathbb D_B=(B_0,\pi_B,\Hh_B,D_B)$ be a spectral triple on $B$.
To construct a lift of the spectral data from $B$ to $C_r^*(\B)$, the following compatibility condition is imposed:

\begin{definition}\label{def:smooth-factor-system}
Factor-system data for $\B$ as in Section~\ref{sec:isometric-factor-systems} are called \emph{smooth} relative to $B_0$ if
\begin{enumerate}
    \item $\gamma_g(B_0) \subseteq B_0\otimes\cL(H_g)$ for all $g\in G$;
    \item $\omega(g,h) \in B_0 \otimes \cL(H_g \otimes H_h,H_{gh})$ for all $g,h \in G$.
\end{enumerate}
\end{definition}

Fix factor-system data for $\B$ that are smooth relative to $B_0$.
Then $p(g) \in B_0 \otimes \cL(H_g)$ for all $g \in G$.
Moreover, by~\eqref{eq:J-factor-system}, 
\begin{equation}\label{eq:smooth-J}
    J_g(B_0 \otimes H_g) \subseteq B_0 \otimes H_{g^{-1}}
\end{equation}
for all $g \in G$.

\subsection{The lifted algebra}

The smooth factor-system data determine a natural dense \Star-subalgebra of $C_r^*(\B)$.
Indeed, for each $g\in G$, put
\[
    B_g^0 := a_g(B_0 \otimes H_g) \subseteq B_g.
\]
Since $B_0$ is dense in $B$ and $a_g$ is surjective, $B_g^0$ is dense in $B_g$. 
By~\eqref{eq:factor-multiplication}, $B_g^0 B_h^0 \subseteq B_{gh}^0$ for all $g,h \in G$.
Furthermore,~\eqref{eq:smooth-J} gives $(B_g^0)^* \subseteq B_{g^{-1}}^0$ for all $g \in G$.
Applying the same inclusion to $g^{-1}$ yields the reverse inclusion, and hence $ (B_g^0)^* = B_{g^{-1}}^0$ for all $g \in G$.
It follows that
\[
    C_c(\B)_0 := \bigoplus_{g\in G}^{\mathrm{alg}} B_g^0
\]
is a dense \Star-subalgebra of $C_r^*(\B)$.

\subsection{The lifted representation}\label{sec:lifted_representation}

The representation $\pi_B$ turns $\Hh_B$ into a left $B$-module and thereby allows the regular representation of $C_r^*(\B)$ to be lifted to a Hilbert space. 
Indeed, the interior tensor product $\Hh_{\B} := \ell^2(\B) \otimes_B \Hh_B$ is a Hilbert space, with inner~product determined by
\[
    \langle \xi \otimes u,\eta \otimes v \rangle
    :=
    \langle u,\pi_B(\langle \xi,\eta \rangle_B)v \rangle_{\Hh_B}
\]
for all $\xi,\eta \in \ell^2(\B)$ and $u,v \in \Hh_B$.

The regular representation of $C_r^*(\B)$ on $\ell^2(\B)$ induces a faithful \Star-representation
\[
    \pi_{\B}: C_r^*(\B) \rightarrow \cL(\Hh_{\B}), 
    \qquad 
    \pi_{\B}(x) := \Lambda_{\B}(x) \otimes \id_{\Hh_B}. 
\]
Indeed, suppose that $T \in \cL_B(\ell^2(\B))$ satisfies $T \otimes \id_{\Hh_B} = 0$. 
Then
\[
    \langle u,\pi_B(\langle T\xi,T\xi \rangle_B)u \rangle_{\Hh_B} = 0
\]
for all $\xi \in \ell^2(\B)$ and $u \in \Hh_B$, and hence $\pi_B(\langle T\xi,T\xi \rangle_B) = 0$ for all $\xi \in \ell^2(\B)$. 
Since $\pi_B$ is faithful, $\langle T\xi,T\xi \rangle_B = 0$ for all $\xi \in \ell^2(\B)$, which in turn yields $T=0$. 
Thus the map
\[
    \rho: \cL_B(\ell^2(\B)) \rightarrow \cL(\Hh_{\B}),
    \qquad
    \rho(T) := T \otimes \id_{\Hh_B},
\]
is faithful. 
Together with the faithfulness of $\Lambda_{\B}$ and the identity
$\pi_{\B} = \rho \circ \Lambda_{\B}$, this shows~that $\pi_{\B}$ is faithful.

By~\eqref{eq:module-decomposition}, the Hilbert space $\Hh_{\B}$ admits the decomposition 
\begin{equation}\label{eq:Hilbert-space-decomposition} 
    \Hh_{\B} \cong \bigoplus_{g\in G} (B_g \otimes_B \Hh_B). 
\end{equation} 
With respect to this decomposition, an element $x \in B_g$, with $g \in G$, viewed as the section supported at $g$, acts by 
\[
    \pi_{\B}(x)(y \otimes u) = xy \otimes u
\]
for all $h \in G$, $y \in B_h$, and $u \in \Hh_B$. 
In particular,~the unit fibre corresponds to the homogeneous component $B \otimes_B \Hh_B$. 
Under the identification 
\[ 
    B \otimes_B \Hh_B \cong \Hh_B, \qquad x \otimes u \mapsto\pi_B(x)u, 
\] 
the action of $B$ on this component agrees with $\pi_B$. 
In this sense, $\pi_{\B}$ is a lift of $\pi_B$ to $C_r^*(\B)$.

\subsection{The fibrewise Dirac operator}

The smooth factor-system data allow the Dirac operator $D_B$ to be lifted fibrewise to the homogeneous components of $\Hh_{\B}$. 
Indeed, fix $g \in G$.
The unitary module map $\Phi_g$ from~\eqref{eq:fibre-decomposition} induces a unitary
\begin{equation}\label{eq:W_g}
    W_g: p(g) (\Hh_B \otimes H_g) \rightarrow B_g \otimes_B \Hh_B.
\end{equation}
Since $p(g) \in B_0 \otimes \cL(H_g)$, the commutator $[D_B \otimes \id,p(g)]$ is bounded. 
Thus, by~\cite[Lem.~2.1]{SchWa20}, the compression $p(g) (D_B \otimes \id) p(g)$ is self-adjoint with compact resolvent on $p(g) (\Hh_B \otimes H_g)$, with domain $p(g) (\dom(D_B) \otimes H_g)$.

Transporting this compression through $W_g$ yields the operator
\[
    D_g := W_g p(g) (D_B \otimes \id) p(g) W_g^*
\]
on $B_g \otimes_B \Hh_B$, with domain
\[
    \dom(D_g) := W_g p(g) (\dom(D_B) \otimes H_g).
\]
Since unitary conjugation preserves self-adjointness and compact resolvent, $D_g$ is self-adjoint and has compact resolvent. 

For $g=e$, the normalization $H_e = \C$ and $s(e) = 1_B$ gives $p(e) = 1_B$ and $W_e = \id_{\Hh_B}$, and hence $D_e = D_B$. 
The construction is summarized in the following lemma:

\begin{lemma}\label{lem:fibrewise-operators} 
For each $g \in G$, the operator $D_g$ is self-adjoint and has compact resolvent. Moreover, $D_e = D_B$. 
\end{lemma}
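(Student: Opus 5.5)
The argument has two parts: first settle the compression $p(g)(D_B\otimes\id)p(g)$ on $p(g)(\Hh_B\otimes H_g)$, then transport it through the unitary $W_g$.

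Fix $g\in G$. Smoothness of the factor-system data gives $p(g)=\gamma_g(1_B)\in B_0\otimes\cL(H_g)$. After choosing an orthonormal basis of $H_g$, this is a projection in $M_{n_g}(B_0)$, where $n_g=\dim H_g$. The operator $D_B\otimes\id$ on $\Hh_B\otimes H_g\cong\Hh_B^{n_g}$ is the diagonal amplification of $D_B$. It is therefore self-adjoint on $\dom(D_B)\otimes H_g$, and it has compact resolvent because $H_g$ is finite-dimensional. Its commutator with $p(g)$ is the matrix of commutators $[D_B,\pi_B(p_{ij})]$, each of which is bounded by the spectral-triple axioms. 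Since each entry $p_{ij}$ lies in $B_0$, the projection $p(g)$ also preserves $\dom(D_B)\otimes H_g$. These are exactly the hypotheses of \cite[Lem.~2.1]{SchWa20}, recalled in Section~\ref{sec:prelim}. That lemma shows the compression is self-adjoint with compact resolvent on $p(g)(\Hh_B\otimes H_g)$, with domain $p(g)(\dom(D_B)\otimes H_g)$.

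Next I would make $W_g$ explicit. It is the composite of the identification $p(g)(\Hh_B\otimes H_g)\cong p(g)(B\otimes H_g)\otimes_B\Hh_B$ with $\Phi_g\otimes\id_{\Hh_B}$. Because $\Phi_g$ is a unitary of Hilbert $B$-modules by Corollary~\ref{cor:projective-fibres}, $\Phi_g\otimes\id$ is a unitary of interior tensor products. The map $B\otimes_B\Hh_B\cong\Hh_B$, $x\otimes u\mapsto\pi_B(x)u$, is also unitary because $B$ is unital and $\pi_B$ is unital. Unitary conjugation preserves self-adjointness, since $(WTW^*)^*=WT^*W^*$ on the transported domain. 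It also preserves compact resolvent, since $(WTW^*-\lambda)^{-1}=W(T-\lambda)^{-1}W^*$. Hence $D_g$ is self-adjoint with compact resolvent on the stated domain.

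For $g=e$, the normalization $H_e=\C$ and $s(e)=1_B$ gives $p(e)=1_B$, and $\Phi_e\colon B\to B$ is the identity. Under $B\otimes_B\Hh_B\cong\Hh_B$ the map $W_e$ is therefore $\id_{\Hh_B}$, so $D_e=D_B$ with the same domain. There is no genuine obstacle here. The only step needing care is checking that $W_g$ really is the unitary obtained from $\Phi_g$ together with the canonical identification $(B\otimes H_g)\otimes_B\Hh_B\cong\Hh_B\otimes H_g$. One must also confirm that this identification restricts correctly to the range of $p(g)$, which holds because $p(g)$ acts on both sides through $\pi_B\otimes\id$.
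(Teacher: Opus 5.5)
Your proposal is correct and follows the same route as the paper. Smoothness puts $p(g)$ in $B_0\otimes\cL(H_g)$, so \cite[Lem.~2.1]{SchWa20} makes the compression self-adjoint with compact resolvent, conjugation by the unitary $W_g$ transports both properties to $D_g$, and the normalization $H_e=\C$, $s(e)=1_B$ gives $p(e)=1_B$ and $W_e=\id_{\Hh_B}$, hence $D_e=D_B$. Your extra details, namely domain invariance under $p(g)$ and the explicit description of $W_g$ via $\Phi_g\otimes\id$, only make explicit what the paper leaves implicit.
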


Passing to the Hilbert space direct sum in~\eqref{eq:Hilbert-space-decomposition}, it follows from \cite[Lem.~5.3.7]{Ped89} that there is a unique self-adjoint operator $D_{\mathrm{fib}}$ on $\Hh_{\B}$ such that $D_{\mathrm{fib}} \vert_{\dom(D_g)} = D_g$ for all $g \in G$. 
Equivalently,
\[
    D_{\mathrm{fib}} := \bigoplus_{g \in G} D_g
\]
with domain
\[
    \dom(D_{\mathrm{fib}})
    :=
    \left\{
        (\xi_g)_{g \in G}:
        \xi_g \in \dom(D_g)
        \text{ for all } g \in G,
        \sum_{g \in G} \| D_g \xi_g \|^2 < \infty
    \right\}.
\]
The operator $D_{\mathrm{fib}}$ will be called the \emph{fibrewise Dirac operator}. 
Its basic properties are recorded below:

\begin{corollary}\label{cor:horizontal-operator}
The following assertions hold:
\begin{enumerate}
    \item $D_{\mathrm{fib}}$ is self-adjoint.
    \item For each $g\in G$, the homogeneous subspace $B_g \otimes_B \Hh_B$ reduces $D_{\mathrm{fib}}$, and the restriction of $D_{\mathrm{fib}}$ to this subspace is $D_g$.
    \item Under the canonical unitary identification $B \otimes_B \Hh_B \cong \Hh_B$, $x \otimes u \mapsto \pi_B(x)u$, the restriction of $D_{\mathrm{fib}}$ to the $e$-homogeneous component agrees with $D_B$.
\end{enumerate}
\end{corollary}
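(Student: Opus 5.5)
The three assertions follow from the orthogonal decomposition~\eqref{eq:Hilbert-space-decomposition} together with Lemma~\ref{lem:fibrewise-operators}. The plan is to use the standard facts about direct sums of self-adjoint operators, as in \cite[Lem.~5.3.7]{Ped89}, and to check each item in turn.

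For (1), we use that each $D_g$ is self-adjoint by Lemma~\ref{lem:fibrewise-operators}. On the domain displayed above, $D_{\mathrm{fib}}$ is densely defined, since it contains the algebraic direct sum $\bigoplus^{\mathrm{alg}}_g \dom(D_g)$. It is symmetric, because we may compute componentwise. For self-adjointness we use the Cayley-transform argument. The operator $D_{\mathrm{fib}} \pm i$ is the direct sum of the operators $D_g \pm i$, each of which is surjective with inverse of norm at most $1$. Hence $\bigoplus_g (D_g\pm i)^{-1}$ is a bounded operator on $\Hh_{\B}$. It maps into $\dom(D_{\mathrm{fib}})$, since $\sum_g\|D_g(D_g\pm i)^{-1}\xi_g\|^2\le\sum_g\|\xi_g\|^2$, and it inverts $D_{\mathrm{fib}}\pm i$. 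So $\operatorname{ran}(D_{\mathrm{fib}}\pm i)=\Hh_{\B}$, and a symmetric operator with this property is self-adjoint. This is the content of \cite[Lem.~5.3.7]{Ped89}, which can simply be cited.

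For (2), let $P_g$ denote the orthogonal projection onto $B_g\otimes_B\Hh_B$ in the decomposition~\eqref{eq:Hilbert-space-decomposition}. By the definition of the domain, $P_g\dom(D_{\mathrm{fib}})\subseteq\dom(D_{\mathrm{fib}})$. On the domain we also have $P_gD_{\mathrm{fib}}=D_{\mathrm{fib}}P_g$, because both sides send $(\xi_h)_h$ to $D_g\xi_g$ placed in the $g$-component. These two facts are exactly the statement that the subspace reduces $D_{\mathrm{fib}}$. The domain of the restriction is $P_g\dom(D_{\mathrm{fib}})=\dom(D_g)$, and on it the restriction acts as $D_g$.

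For (3), combine (2) with Lemma~\ref{lem:fibrewise-operators}, which gives $D_e=D_B$. The only point needing care is that the identification $B\otimes_B\Hh_B\cong\Hh_B$, $x\otimes u\mapsto\pi_B(x)u$, coincides with $W_e$ under the normalization $H_e=\C$ and $s(e)=1_B$. In that case $p(e)=1_B$ and $\Phi_e=\id_B$. So $W_e$ is the canonical map from $\Hh_B=p(e)(\Hh_B\otimes\C)$ to $B\otimes_B\Hh_B$, namely $u\mapsto 1_B\otimes u$, and its inverse is $x\otimes u\mapsto\pi_B(x)u$. This bookkeeping of the identification is the only step beyond the general facts above, and it is routine.
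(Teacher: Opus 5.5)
Your proposal is correct and follows the paper's route. The paper obtains $D_{\mathrm{fib}}$ as the self-adjoint direct sum via \cite[Lem.~5.3.7]{Ped89}, from which the reducing property is immediate, and it gets item (3) from $D_e=D_B$ in Lemma~\ref{lem:fibrewise-operators}, with $W_e$ being the canonical map $u\mapsto 1_B\otimes u$ under the normalization $H_e=\C$, $s(e)=1_B$; you spell out the routine verifications it leaves implicit (the $\pm i$ range criterion, $P_g$ commuting with $D_{\mathrm{fib}}$, and the bookkeeping for $W_e$).
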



\subsection{Spectral regularity}

The next step is to establish that $D_{\mathrm{fib}}$ has bounded commutators with $C_c(\B)_0$.
For all $g \in G$, $x \in B_g$, and $h \in G$, define
\[
    T_x^h: B_h \otimes_B \Hh_B \rightarrow B_{gh} \otimes_B \Hh_B,
    \qquad
    T_x^h(y \otimes u) := xy \otimes u.
\]
With respect to the decomposition~\eqref{eq:Hilbert-space-decomposition}, one then has
\[
    \pi_{\B}(x) = \bigoplus_{h \in G} T_x^h.
\]

\begin{definition}\label{def:spectral-regularity}
The smooth factor-system data are called \emph{spectrally regular} relative to $\mathbb D_B$ if, for all $g \in G$ and $x \in B_g^0$, the following conditions hold:
\begin{enumerate}
    \item For each $h \in G$, one has
    \[
        T_x^h \dom(D_h) \subseteq \dom(D_{gh}),
    \]
    and the operator
    \[
        \nabla_{x,h} := D_{gh} T_x^h-T_x^h D_h
    \]
    on $\dom(D_h)$ extends to a bounded operator from
    $B_h \otimes_B \Hh_B$ to $B_{gh} \otimes_B \Hh_B$.
    \item The family $(\nabla_{x,h})_{h\in G}$ is uniformly bounded, i.e.,
    \[
        \sup_{h\in G} \| \nabla_{x,h} \| < \infty.
    \]
\end{enumerate}
\end{definition}

Spectral regularity provides the uniform control needed for bounded commutators with $D_{\mathrm{fib}}$:

\begin{lemma}\label{lem:fibrewise-commutator}
Suppose that the factor-system data are spectrally regular relative to $\mathbb D_B$. Then
\[
    \pi_{\B}(C_c(\B)_0)\dom(D_{\mathrm{fib}})
    \subseteq
    \dom(D_{\mathrm{fib}})
    \qquad
    \text{and}
    \qquad
    [D_{\mathrm{fib}},\pi_{\B}(\xi)] \in \cL(\Hh_{\B})
\]
for all $\xi \in C_c(\B)_0$.
\end{lemma}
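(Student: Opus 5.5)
By linearity it suffices to treat a homogeneous element $\xi = x \in B_g^0$ with $g \in G$ fixed. Indeed, $C_c(\B)_0$ is the algebraic direct sum of the $B_g^0$, and domain invariance and boundedness of commutators are both preserved under finite sums. With respect to the decomposition~\eqref{eq:Hilbert-space-decomposition}, we have $\pi_{\B}(x)=\bigoplus_{h}T_x^h$. This operator maps the $h$-component to the $gh$-component, and $h\mapsto gh$ is a bijection of $G$. Hence $\pi_{\B}(x)$ is a block "weighted shift" operator, and $\|T_x^h\|\le\|x\|$ for all $h$.

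\textbf{Step 1: domain invariance.} Let $\zeta=(\zeta_h)_h\in\dom(D_{\mathrm{fib}})$. Then $(\pi_{\B}(x)\zeta)_{gh}=T_x^h\zeta_h$, and by condition (1) of Definition~\ref{def:spectral-regularity} this lies in $\dom(D_{gh})$. On $\dom(D_h)$ we have
\[
    D_{gh}T_x^h\zeta_h=T_x^hD_h\zeta_h+\nabla_{x,h}\zeta_h .
\]
Setting $C:=\sup_h\|\nabla_{x,h}\|<\infty$ from condition (2), we obtain
\[
    \sum_h\|D_{gh}T_x^h\zeta_h\|^2\le 2\|x\|^2\sum_h\|D_h\zeta_h\|^2+2C^2\sum_h\|\zeta_h\|^2<\infty .
\]
By the description of $\dom(D_{\mathrm{fib}})$ as a direct sum, this gives $\pi_{\B}(x)\zeta\in\dom(D_{\mathrm{fib}})$.

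\textbf{Step 2: boundedness of the commutator.} On $\dom(D_{\mathrm{fib}})$ the computation above shows that
\[
    [D_{\mathrm{fib}},\pi_{\B}(x)]\zeta=\bigl(\nabla_{x,h}\zeta_h\bigr)_{gh}.
\]
Since each $\nabla_{x,h}$ maps the $h$-component into the $gh$-component and $h\mapsto gh$ is injective, the components are orthogonal, and
\[
    \|[D_{\mathrm{fib}},\pi_{\B}(x)]\zeta\|^2=\sum_h\|\nabla_{x,h}\zeta_h\|^2\le C^2\|\zeta\|^2 .
\]
Thus the commutator is bounded on the dense domain, so it extends to an element of $\cL(\Hh_{\B})$ of norm at most $C$. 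Summing over the finitely many homogeneous components of a general $\xi$ completes the argument.

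\textbf{Main obstacle.} There is no real obstacle; the only care needed is bookkeeping. One must use the bijectivity of left translation so that the images of distinct components are orthogonal, and one must check that the commutator on $\dom(D_{\mathrm{fib}})$ is computed componentwise. The latter follows from Corollary~\ref{cor:horizontal-operator}(2), since the homogeneous subspaces reduce $D_{\mathrm{fib}}$.
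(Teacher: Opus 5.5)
Your proposal is correct and follows essentially the same route as the paper's proof. You reduce to homogeneous $x\in B_g^0$, combine $D_{gh}T_x^h\zeta_h=T_x^hD_h\zeta_h+\nabla_{x,h}\zeta_h$ with $\|T_x^h\|\le\|x\|$ and the uniform bound on $\nabla_{x,h}$ to get domain invariance, and use bijectivity of $h\mapsto gh$ to identify the commutator with the orthogonal direct sum of the $\nabla_{x,h}$ (the paper states the norm as exactly $\sup_{h}\|\nabla_{x,h}\|$, while your upper bound is all that is needed).
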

\begin{proof} 
It suffices to consider $x \in B_g^0$ for some $g \in G$. 
Let $\xi=(\xi_h)_{h \in G} \in \dom(D_{\mathrm{fib}})$. 
By spectral regularity, $T_x^h \xi_h \in \dom(D_{gh})$, $D_{gh} T_x^h \xi_h = T_x^h D_h \xi_h + \nabla_{x,h} \xi_h$ for all $h \in G$, and 
\[
    \sup_{h \in G} \| \nabla_{x,h} \| < \infty.
\]
Since $\| T_x^h \| \leq \| x \|$ for all $h \in G$ and $\xi \in \dom(D_{\mathrm{fib}})$, it follows that
\[
    \sum_{h \in G} \| D_{gh} T_x^h \xi_h \|^2 < \infty.
\]
Thus $\pi_{\B}(x)\xi \in \dom(D_{\mathrm{fib}})$, and consequently $\pi_{\B}(x)\dom(D_{\mathrm{fib}}) \subseteq \dom(D_{\mathrm{fib}})$.
The bijectivity of $h\mapsto gh$ implies that the operators $\nabla_{x,h}$ have pairwise orthogonal source and target summands.
Therefore, on $\dom(D_{\mathrm{fib}})$,
\[
    [D_{\mathrm{fib}},\pi_{\B}(x)] = \bigoplus_{h\in G} \nabla_{x,h},
    \qquad
    \text{and}
    \qquad
    \| [D_{\mathrm{fib}},\pi_{\B}(x)] \| = \sup_{h\in G} \|\nabla_{x,h} \|
    < \infty.
\]
Hence the commutator extends to a bounded operator on $\Hh_{\B}$.
\end{proof}

\subsection{A factor-system criterion}

Spectral regularity can be expressed in terms of the factor-system data.
Through the representation $\pi_B$, elements of $B \otimes \cL(K,L)$ are regarded as operators from $\Hh_B \otimes K$ to $\Hh_B \otimes L$ for finite-dimensional Hilbert spaces $K$ and~$L$.

For all $h \in G$ and $x \in B_0$, put
\[
    \partial\gamma_h(x)
    :=
    [D_B \otimes \id,\gamma_h(x)].
\]
For all $g,h \in G$, put
\[
    \partial\omega(g,h)
    :=
    (D_B \otimes \id) \omega(g,h) - \omega(g,h) (D_B \otimes \id).
\]
By smoothness, these operators extend to bounded operators for each fixed choice of indices.

For each $h\in G$, put
\[
    \psi_h
    :=
    W_h p(h): \Hh_B\otimes H_h \rightarrow B_h \otimes_B \Hh_B.
\]
By construction, $\psi_h$ is surjective.
Under the identification in~\eqref{eq:W_g}, one has
\[
    \psi_h(u \otimes \vartheta)
    =
    a_h(1_B \otimes \vartheta) \otimes u
\]
for all $u \in \Hh_B$ and $\vartheta \in H_h$.
Consequently,~\eqref{eq:factor-multiplication} gives
\begin{equation}\label{eq:coordinate-representation}
    \pi_{\B}(a_g(x \otimes \eta))
    \psi_h(u \otimes \vartheta)
    =
    \psi_{gh}(\omega(g,h) \gamma_h(x)_{13} (u \otimes \eta \otimes \vartheta))
\end{equation}
for all $g,h \in G$, $x \in B$, $\eta \in H_g$, $u \in \Hh_B$, and $\vartheta \in H_h$.

\begin{theorem}[Factor-system criterion]\label{thm:factor-criterion}
Suppose that the smooth factor-system data satisfy
\[
    \sup_{h \in G} \| \partial\gamma_h(x) \| < \infty
\]
for all $x \in B_0$, and
\[
    \sup_{h \in G} \| \partial\omega(g,h) \| < \infty
\]
for all $g \in G$.
Then the factor-system data are spectrally regular relative to $\mathbb D_B$. 
In~particular,
\[
    [D_{\mathrm{fib}},\pi_{\B}(\xi)] \in \cL(\Hh_{\B})
\]
for all $\xi \in C_c(\B)_0$.
\end{theorem}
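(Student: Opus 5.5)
We verify the two conditions of Definition~\ref{def:spectral-regularity} for a homogeneous element and then invoke Lemma~\ref{lem:fibrewise-commutator} for the final assertion. Fix $g \in G$ and $x \in B_g^0$. Since $a_g$ is linear and $B_g^0 = a_g(B_0\otimes H_g)$, we may assume $x = a_g(b\otimes\eta)$ with $b\in B_0$ and $\eta\in H_g$. Throughout, we work in the coordinates $\psi_h = W_h p(h)$. Under $W_h$, the operator $D_h$ corresponds to $p(h)(D_B\otimes\id)p(h)$ on $p(h)(\Hh_B\otimes H_h)$. Formula~\eqref{eq:coordinate-representation} shows that, after conjugation by the unitaries $W_h$ and $W_{gh}$, the operator $T_x^h$ is the restriction to $p(h)(\Hh_B\otimes H_h)$ of
\[
    S_h := \omega(g,h)\,\gamma_h(b)_{13}\,(\,\cdot\,\otimes\eta),
\]
viewed as a map into $\Hh_B\otimes H_{gh}$. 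Its range lies in $p(gh)(\Hh_B\otimes H_{gh})$, because $\omega(g,h)\omega(g,h)^*=p(gh)$ and hence $p(gh)\omega(g,h)=\omega(g,h)$.

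First, domain invariance. Since $\omega(g,h)$ and $\gamma_h(b)$ lie in $B_0$-matrices by smoothness, each preserves $\dom(D_B)\otimes(\text{finite-dimensional space})$. The map $u\mapsto u\otimes\eta$ trivially preserves it as well. Hence $S_h$ maps $p(h)(\dom(D_B)\otimes H_h)$ into $p(gh)(\dom(D_B)\otimes H_{gh})$, which is exactly $T_x^h\dom(D_h)\subseteq\dom(D_{gh})$.

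Second, the commutator. For $\xi\in p(h)(\dom(D_B)\otimes H_h)$, the ranges of $S_h$ and of $\xi$ are fixed by $p(gh)$ and $p(h)$ respectively. Hence
\[
    W_{gh}^*\nabla_{x,h}W_h\,\xi = p(gh)\bigl[(D_B\otimes\id)S_h - S_h(D_B\otimes\id)\bigr]\xi .
\]
We expand the bracket by the Leibniz rule:
\[
    \partial\omega(g,h)\,\gamma_h(b)_{13}(\xi\otimes\eta) + \omega(g,h)\,\partial\gamma_h(b)_{13}(\xi\otimes\eta).
\]
The operator $D_B\otimes\id$ commutes with tensoring by $\eta$ and with the leg-numbering amplification, so no extra terms appear. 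The first term has norm at most $\|\partial\omega(g,h)\|\,\|b\|\,\|\eta\|\,\|\xi\|$. For the second we use $\|\omega(g,h)\|\le 1$, since it is a partial isometry, which bounds it by $\|\partial\gamma_h(b)\|\,\|\eta\|\,\|\xi\|$. Taking the supremum over $h$ and using the two hypotheses gives a bound independent of $h$. Therefore $\nabla_{x,h}$ extends boundedly, with $\sup_h\|\nabla_{x,h}\|<\infty$.

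The main obstacle is bookkeeping rather than analysis. We must justify carefully that the intertwining~\eqref{eq:coordinate-representation} transports $T_x^h$ to $S_h$ exactly on the compressed spaces, without an extra $p(h)$ or $p(gh)$ commutator term. This holds because $\xi=p(h)\xi$ and $S_h = p(gh)S_h$, and we only differentiate after applying $S_h$, never $p(h)$ itself. We must also check that amplifying $\partial\omega$ and $\partial\gamma_h$ by identities on the auxiliary factors does not change their norms. For general $x\in B_g^0$, we write $x$ as a finite sum of such elementary tensors and use the triangle inequality. Finally, the general $\xi\in C_c(\B)_0$ is a finite sum of homogeneous elements, so Lemma~\ref{lem:fibrewise-commutator} yields boundedness of $[D_{\mathrm{fib}},\pi_{\B}(\xi)]$.
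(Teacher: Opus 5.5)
Your proposal is correct and follows the paper's proof essentially step for step. It uses the same reduction to $x=a_g(b\otimes\eta)$, the same intertwiner $S_h=\omega(g,h)\gamma_h(b)_{13}V_\eta^h$ (the paper's $\widetilde T_z^h$), the same Leibniz splitting into a $\partial\omega(g,h)$ term and a $\partial\gamma_h(b)$ term, and the same bound $\|\eta\|(\|\partial\omega(g,h)\|\|b\|+\|\partial\gamma_h(b)\|)$. One bookkeeping step should be stated explicitly: removing the outer $p(h)$ in $T_x^hD_h$ uses $S_hp(h)=S_h$, which follows from $\gamma_h(b)=\gamma_h(b)p(h)$. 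The paper gets this for free by writing the intertwining as $T_z^h\psi_h=\psi_{gh}\widetilde T_z^h$ on all of $\Hh_B\otimes H_h$.
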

\begin{proof}
By linearity, it suffices to consider $z = a_g(x \otimes \eta)$ for some $g \in G$, $x \in B_0$, and~$\eta \in H_g$. 

Fix $h \in G$ and define
\[
    V_\eta^h:
    \Hh_B \otimes H_h \rightarrow \Hh_B \otimes H_g \otimes H_h,
    \qquad
    V_\eta^h(u \otimes \vartheta) := u \otimes \eta \otimes \vartheta.
\]
Put $\widetilde T_z^h := \omega(g,h) \gamma_h(x)_{13} V_\eta^h$.
Equation~\eqref{eq:coordinate-representation} then gives $T_z^h \psi_h = \psi_{gh} \widetilde T_z^h$.

Smoothness implies that $\gamma_h(x)_{13}$ preserves the corresponding domain of the amplification of $D_B$, while $\omega(g,h)$ maps this domain into $\dom(D_B) \otimes H_{gh}$. 
The map $V_\eta^h$ intertwines the respective amplifications.
Hence
\[
    \widetilde T_z^h
    (\dom(D_B) \otimes H_h)
    \subseteq
    \dom(D_B) \otimes H_{gh}.
\]
Moreover, the range relation $p(gh) \omega(g,h) = \omega(g,h)$ gives $p(gh) \widetilde T_z^h = \widetilde T_z^h$.
Consequently, 
\[ 
    \widetilde T_z^h (p(h)(\dom(D_B) \otimes H_h)) 
    \subseteq 
    p(gh)(\dom(D_B) \otimes H_{gh}). 
\]

For each $\zeta \in p(h)(\dom(D_B) \otimes H_h)$, one therefore has
\[
    T_z^h \psi_h(\zeta) = \psi_{gh}(\widetilde T_z^h \zeta)
    \in
    \dom(D_{gh}),
\]
and thus $T_z^h \dom(D_h) \subseteq \dom(D_{gh})$, as required.

On the same domain,
\[
    \nabla_{z,h} \psi_h(\zeta)
    =
    \psi_{gh}(\partial\omega(g,h) \gamma_h(x)_{13} V_\eta^h \zeta) 
    + 
    \psi_{gh}(\omega(g,h) \partial\gamma_h(x)_{13} V_\eta^h \zeta).
\]

Since $\psi_h$ restricts to a unitary from $p(h)(\Hh_B \otimes H_h)$ onto $B_h \otimes_B \Hh_B$, $\gamma_h$ is contractive, $\| \omega(g,h) \| \leq 1$, and $\|V_\eta^h\|=\|\eta\|$, it follows that
\[
    \| \nabla_{z,h} \|
    \leq
    \| \eta \| (\| \partial\omega(g,h) \| \| x \| + \| \partial\gamma_h(x) \|).
\]
Thus $\nabla_{z,h}$ extends to a bounded operator from $B_h \otimes_B \Hh_B$ to $B_{gh} \otimes_B \Hh_B$.

By the assumptions of the theorem,
\[
    \sup_{h \in G} \| \nabla_{z,h} \|
    \leq
    \| \eta \|
    \left(\| x \| \sup_{h \in G} \| \partial\omega(g,h) \| + \sup_{h \in G} \| \partial\gamma_h(x) \|\right)
    < \infty.
\]

This establishes spectral regularity of the factor-system data relative to $\mathbb D_B$.
The final assertion is a consequence of Lemma~\ref{lem:fibrewise-commutator}.
\end{proof}

\section{The lifted spectral triple}
\label{sec:construction}

Retain the saturated Fell bundle $\B=(B_g)_{g \in G}$ with unital unit fibre $B=B_e$, equipped with the spectral triple $\mathbb D_B=(B_0,\pi_B,\Hh_B,D_B)$, and the smooth factor-system data relative to $B_0$ fixed in Section~\ref{sec:lifts}.
These data determine the Hilbert space $\Hh_{\B}$, the smooth \Star-algebra $C_c(\B)_0$, and the fibrewise operator $D_{\mathrm{fib}}$.
Under the spectral-regularity assumptions, $D_{\mathrm{fib}}$ has bounded commutators with $C_c(\B)_0$.

\subsection{Matrix-valued length functions and the length operator}

To control the group direction, the vertical geometry associated with matrix-valued weights studied by Masters~\cite{MastersThesis} is adapted to the present setting. 
Let $V$ be a finite-dimensional Hilbert space.
Following~\cite{MastersThesis}, a function $\ell: G \rightarrow \cL(V)_{\mathrm{sa}}$ is called~\emph{translation bounded} if
\[
    \sup_{h \in G} \| \ell(gh)-\ell(h) \| < \infty
\]
for all $g \in G$, and \emph{proper} if, for each $R \geq 0$, the set
\[
    \{g \in G: \min\operatorname{Spec} |\ell(g)| \leq R\}
\]
is finite.
A proper translation-bounded function $\ell: G \rightarrow \cL(V)_{\mathrm{sa}}$ with $\ell(e) = 0$ will be called a \emph{matrix-valued length function}.

Let $\ell: G \rightarrow \cL(V)_{\mathrm{sa}}$ be a matrix-valued length function.
Using the decomposition
\[
    \Hh_{\B} \otimes V
    \cong
    \bigoplus_{g \in G} (B_g \otimes_B \Hh_B) \otimes V,
\]
define the \emph{length operator}
\begin{equation}\label{eq:Mell}
    M_\ell(\xi_g)_{g \in G} := ((\id \otimes \ell(g)) \xi_g)_{g \in G}
\end{equation}
with domain
\[
    \dom(M_\ell)
    :=
    \left\{
        (\xi_g)_{g \in G}:
        \sum_{g \in G} \| (\id \otimes \ell(g))\xi_g \|^2
        <\infty
    \right\}.
\]
The operator $M_\ell$ is self-adjoint, being the maximal diagonal multiplication operator associated with the self-adjoint operators $\ell(g)$, $g \in G$.
For each $g \in G$, the homogeneous component $(B_g \otimes_B \Hh_B) \otimes V$ reduces $M_\ell$, and its restriction to this component is $\id_{B_g \otimes_B \Hh_B} \otimes \ell(g)$.

The following is the discrete saturated version of the bounded-commutator estimate underlying the vertical unbounded Kasparov module constructed in~\cite{MastersThesis}. It is included for completeness and to fix the notation used below.

\begin{lemma}\label{lem:vertical-commutator}
For all $g \in G$ and $x \in B_g$, $(\pi_{\B}(x) \otimes \id_V) \dom(M_\ell) \subseteq \dom(M_\ell)$, and the commutator $[M_\ell,\pi_{\B}(x) \otimes \id_V]$ extends to a bounded operator on $\Hh_{\B} \otimes V$ satisfying
\[
    \| [M_\ell,\pi_{\B}(x) \otimes \id_V] \|
    \leq
    c_\ell(g) \| x \|,
\]
where
\[
    c_\ell(g) := \sup_{h \in G} \| \ell(gh)-\ell(h) \| < \infty.
\]
\end{lemma}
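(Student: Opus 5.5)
The plan is to work homogeneous component by homogeneous component, in the same way as in the proof of Lemma~\ref{lem:fibrewise-commutator}. Fix $g\in G$ and $x\in B_g$. For each $h\in G$, the operator $\pi_{\B}(x)\otimes\id_V$ maps the summand $(B_h\otimes_B\Hh_B)\otimes V$ into $(B_{gh}\otimes_B\Hh_B)\otimes V$ through $T_x^h\otimes\id_V$. Since $T_x^h$ acts only on the first tensor factor and $\id\otimes\ell(h)$ acts only on the $V$-factor, these operators commute in the following sense: for $\xi_h\in(B_h\otimes_B\Hh_B)\otimes V$ one has
\[
    (\id\otimes\ell(gh))(T_x^h\otimes\id_V)\xi_h-(T_x^h\otimes\id_V)(\id\otimes\ell(h))\xi_h
    =
    (T_x^h\otimes(\ell(gh)-\ell(h)))\xi_h .
\]
Because $V$ is finite-dimensional, every $\ell(h)$ is bounded, so each component of $M_\ell$ is an everywhere-defined bounded operator. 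The identity above therefore holds on the whole summand, and no domain issues arise at the level of individual fibres.

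For domain invariance, let $\xi=(\xi_h)_h\in\dom(M_\ell)$. The $gh$-component of $(\pi_{\B}(x)\otimes\id_V)\xi$ is $(T_x^h\otimes\id_V)\xi_h$. Applying $\id\otimes\ell(gh)$ and using the identity above, this component equals
\[
    (T_x^h\otimes\id_V)(\id\otimes\ell(h))\xi_h+(T_x^h\otimes(\ell(gh)-\ell(h)))\xi_h .
\]
Its norm is therefore at most
\[
    \|x\|\,\|(\id\otimes\ell(h))\xi_h\|+\|x\|\,c_\ell(g)\,\|\xi_h\|,
\]
using $\|T_x^h\|\le\|x\|$. The inequality $(a+b)^2\le 2a^2+2b^2$ then shows that the squared norms are summable over $h$. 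Since $h\mapsto gh$ is a bijection, the image lies in $\dom(M_\ell)$. Here $c_\ell(g)<\infty$ holds by translation boundedness.

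On $\dom(M_\ell)$, the commutator is then the direct sum $\bigoplus_h T_x^h\otimes(\ell(gh)-\ell(h))$. Its source summands are pairwise orthogonal, and so are its target summands, again because $h\mapsto gh$ is bijective. Its norm is therefore
\[
    \sup_h\|T_x^h\otimes(\ell(gh)-\ell(h))\|\le\|x\|\,c_\ell(g).
\]
It follows that the commutator extends to a bounded operator with the stated bound. The only mildly delicate point is the domain bookkeeping, namely checking that the maximal domain of $M_\ell$ is preserved. Translation boundedness handles this directly, so I do not expect a genuine obstacle.
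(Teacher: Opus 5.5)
Your proposal is correct and follows essentially the same route as the paper's proof. Both use the componentwise identity $(\id\otimes\ell(gh))(T_x^h\otimes\id_V)=(T_x^h\otimes\id_V)(\id\otimes\ell(h))+T_x^h\otimes(\ell(gh)-\ell(h))$, the $(a+b)^2\le 2a^2+2b^2$ estimate with $\|T_x^h\|\le\|x\|$ to get domain invariance, and the identification of the commutator as $\bigoplus_h T_x^h\otimes(\ell(gh)-\ell(h))$ with norm at most $c_\ell(g)\|x\|$.
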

\begin{proof}
Fix $g \in G$ and $x \in B_g$, and let $\xi = (\xi_h)_{h \in G} \in \dom(M_\ell)$.
For each $h \in G$, one has $\| T_x^h \| \leq \| x \|$ and $\| \ell(gh) - \ell(h) \| \leq c_\ell(g)$.
Hence
\[
    \sum_{h\in G} \| (\id \otimes \ell(gh)) (T_x^h\otimes\id_V)\xi_h \|^2
    \leq
    2 \| x \|^2 \sum_{h\in G} ( \| (\id \otimes \ell(h)) \xi_h \|^2 + c_\ell(g)^2 \| \xi_h \|^2)
    < \infty.
\]
Thus $(\pi_{\B}(x) \otimes \id_V) \xi \in \dom(M_\ell)$.

On $\dom(M_\ell)$,
\[
    [M_\ell,\pi_{\B}(x) \otimes \id_V] = \bigoplus_{h \in G} T_x^h \otimes (\ell(gh)-\ell(h)).
\]
Therefore, $\| [M_\ell,\pi_{\B}(x) \otimes \id_V] \| \leq c_\ell(g)\|x\|$, which proves the assertion.
\end{proof}

\subsection{The lifted Dirac operator}

The fibrewise operator $D_{\mathrm{fib}}$ and the length operator $M_\ell$
are combined to form the Dirac operator of the lifted spectral triple.
The two contributions are separated using the first two Pauli matrices:

Let
\[
    \sigma_1
    :=
    \begin{pmatrix}
        0&1\\
        1&0
    \end{pmatrix},
    \qquad
    \sigma_2
    :=
    \begin{pmatrix}
        0&-i\\
        i&0
    \end{pmatrix}
\]
be the first two Pauli matrices. 

On $\Hh_{\B} \otimes V \otimes \C^2$, define
\[
    D_{\B,\ell} := D_{\mathrm{fib}} \otimes \id_V \otimes \sigma_1 + M_\ell \otimes \sigma_2
\]
with domain
\[
    \dom(D_{\B,\ell}) := (\dom(D_{\mathrm{fib}} \otimes \id_V) \cap \dom(M_\ell)) \otimes \C^2.
\]
The intersection $\dom(D_{\mathrm{fib}} \otimes \id_V) \cap \dom(M_\ell)$ is dense in $\Hh_{\B} \otimes V$, because it contains
\[
    \bigoplus_{g \in G}^{\mathrm{alg}} (\dom(D_g) \otimes V).
\]

\begin{lemma}\label{lem:compact-resolvent}
The operator $D_{\B,\ell}$ is self-adjoint and has compact resolvent.
\end{lemma}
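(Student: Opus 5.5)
The proof exploits the fact that $D_{\B,\ell}$ is block diagonal with respect to the decomposition
\[
    \Hh_{\B}\otimes V\otimes\C^2\cong\bigoplus_{g\in G}(B_g\otimes_B\Hh_B)\otimes V\otimes\C^2 .
\]
By Corollary~\ref{cor:horizontal-operator} and the remarks after \eqref{eq:Mell}, each summand reduces both $D_{\mathrm{fib}}\otimes\id_V$ and $M_\ell$. On the $g$-th summand, $D_{\B,\ell}$ therefore restricts to
\[
    D_{g,\ell}:=D_g\otimes\id_V\otimes\sigma_1+\id\otimes\ell(g)\otimes\sigma_2 .
\]
Here the second term is bounded and self-adjoint, since $V$ is finite-dimensional. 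By Kato--Rellich, $D_{g,\ell}$ is self-adjoint on $\dom(D_g)\otimes V\otimes\C^2$.

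Next I would show that the maximal direct sum $\bigoplus_g D_{g,\ell}$, which is self-adjoint by \cite[Lem.~5.3.7]{Ped89}, coincides with $D_{\B,\ell}$ on the stated domain. The key tool is the square identity, using $\sigma_1^2=\sigma_2^2=1$ and $\sigma_1\sigma_2+\sigma_2\sigma_1=0$:
\[
    D_{g,\ell}^2=D_g^2\otimes\id_V\otimes 1+\id\otimes\ell(g)^2\otimes1+\bigl(D_g\otimes\ell(g)-D_g\otimes\ell(g)\bigr)\otimes i\sigma_1\sigma_2 .
\]
The cross term cancels because $D_g\otimes\id_V$ and $\id\otimes\ell(g)$ act on different tensor factors and hence commute strongly. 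It follows that
\[
    \|D_{g,\ell}\zeta\|^2=\|(D_g\otimes\id)\zeta\|^2+\|(\id\otimes\ell(g))\zeta\|^2
\]
for $\zeta$ in the domain. Summing over $g$, a vector $(\zeta_g)_g$ lies in the domain of the direct sum if and only if it lies in $\dom(D_{\mathrm{fib}}\otimes\id_V)\cap\dom(M_\ell)$, tensored with $\C^2$. The two operators agree there, so $D_{\B,\ell}$ is self-adjoint.

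For compact resolvent, the square identity gives
\[
    D_{g,\ell}^2\geq \id\otimes\ell(g)^2\otimes1\geq \mu_g^2 ,
\]
where $\mu_g:=\min\operatorname{Spec}|\ell(g)|$. Moreover, $D_{g,\ell}^2$ has compact resolvent: $D_g^2\otimes\id_V$ has compact resolvent on a finite multiple of $B_g\otimes_B\Hh_B$ by Lemma~\ref{lem:fibrewise-operators}, and adding a bounded operator preserves this. Hence each block $(D_{g,\ell}\pm i)^{-1}$ is compact, with norm at most $(1+\mu_g^2)^{-1/2}$. Properness of $\ell$ means that, for every $\varepsilon>0$, only finitely many $g$ satisfy $(1+\mu_g^2)^{-1/2}\geq\varepsilon$. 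The resolvent of $D_{\B,\ell}$ is the direct sum of these blocks, so it is a norm limit of finite direct sums of compact operators and is therefore compact.

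The main obstacle is the domain identification in the second paragraph: the self-adjointness of a sum of two unbounded operators on the stated intersection domain. The anticommutation of the Pauli matrices, combined with the blockwise boundedness of $\ell(g)$, is what makes the cross terms vanish and gives the Pythagorean norm identity. I would verify that identity carefully on $\dom(D_g)\otimes V\otimes\C^2$ before summing over $g$.
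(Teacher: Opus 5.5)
Your proposal is correct and follows the paper's proof essentially step by step. You decompose into the blocks $D_{g,\ell}$, get self-adjointness of each block as a bounded perturbation, pass to the direct sum, identify the domain via the Pythagorean norm identity coming from the anticommutation of $\sigma_1$ and $\sigma_2$, and obtain compact resolvent from blockwise compactness together with the norm decay $(1+\mu_g^2)^{-1/2}\to 0$ forced by properness. The only differences are cosmetic: you work with $(D_{g,\ell}\pm i)^{-1}$ instead of $(\id+D_{g,\ell}^2)^{-1}$, and the Pauli factor in your cross term should read $\sigma_1\sigma_2$ rather than $i\sigma_1\sigma_2$, which is harmless because its coefficient vanishes.
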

\begin{proof}
With respect to the decomposition~\eqref{eq:Hilbert-space-decomposition}, the restrictions of $D_{\B,\ell}$ to the homogeneous components are the operators
\[
    D_{g,\ell} := D_g \otimes \id_V \otimes \sigma_1 + \id \otimes \ell(g) \otimes \sigma_2,
    \qquad
    g \in G.
\]
For each $g \in G$, the operator $D_{g,\ell}$ is self-adjoint on $\dom(D_g) \otimes V \otimes \C^2$, because the first summand is self-adjoint and the second is bounded and self-adjoint.

By~\cite[Lem.~5.3.7]{Ped89}, the operator $\bigoplus_{g\in G} D_{g,\ell}$ is self-adjoint on
\[
    \left\{
        (\zeta_g)_{g \in G}:
        \zeta_g \in \dom(D_g) \otimes V \otimes \C^2
        \text{ for all }g \in G,\ 
        \sum_{g \in G} \| D_{g,\ell} \zeta_g \|^2 
        < \infty
    \right\}.
\]
Moreover, the anticommutation relation $\sigma_1 \sigma_2 + \sigma_2 \sigma_1 = 0$ implies that, for all
$g \in G$ and $\zeta \in \dom(D_g) \otimes V \otimes \C^2$,
\[
    \| D_{g,\ell} \zeta \|^2
    =
    \| (D_g \otimes \id_V \otimes \sigma_1) \zeta \|^2 + \| (\id \otimes \ell(g) \otimes \sigma_2) \zeta \|^2.
\]
It follows that the natural domain of $\bigoplus_{g \in G} D_{g,\ell}$ is 
\[
   (\dom(D_{\mathrm{fib}} \otimes \id_V) \cap \dom(M_\ell)) \otimes \C^2 = \dom(D_{\B,\ell}).
\]
Thus $D_{\B,\ell} = \bigoplus_{g \in G} D_{g,\ell}$, and $D_{\B,\ell}$ is self-adjoint.

For each $g \in G$, the anticommutation relation $\sigma_1 \sigma_2 + \sigma_2 \sigma_1 = 0$ also gives
\[
    D_{g,\ell}^2 = (D_g^2 \otimes \id_V + \id \otimes \ell(g)^2) \otimes \id_{\C^2}
\]
on $\dom(D_g^2) \otimes V \otimes \C^2$.
Set $m_\ell(g) := \min\operatorname{Spec} |\ell(g)|$.
Since $\ell(g)^2 \geq m_\ell(g)^2 \id_V$,
\[
    \id + D_{g,\ell}^2 \geq (1 + m_\ell(g)^2) \id,
\]
and hence
\[
    \| (\id + D_{g,\ell}^2)^{-1} \| \leq \frac{1}{1 + m_\ell(g)^2}.
\]

Each operator $(\id + D_{g,\ell}^2)^{-1}$, $g \in G$, is compact, because $D_g$ has compact resolvent, $V$ is finite-dimensional, and $\id \otimes \ell(g)^2$ is bounded.
Moreover, by properness of $\ell$, $m_\ell(g) \to \infty$ at infinity on $G$. 
Therefore, 
\[
    \| (\id + D_{g,\ell}^2)^{-1} \| \to 0
\]
at infinity. 
The decomposition
\[
    (\id + D_{\B,\ell}^2)^{-1} = \bigoplus_{g \in G} (\id + D_{g,\ell}^2)^{-1},
\]
together with the norm decay above, shows that $(\id  +D_{\B,\ell}^2)^{-1}$ is compact.
Thus  $D_{\B,\ell}$ has compact resolvent.
\end{proof}

\subsection{The lifting theorem}

The preceding constructions can now be combined to obtain a spectral triple on the reduced cross-sectional C\Star-algebra:

\begin{theorem}[Fell-bundle lifting theorem]
\label{thm:main}
Let $G$ be a countable discrete group, let $V$ be a finite-dimensional Hilbert space, and let
$\ell: G \rightarrow \cL(V)_{\mathrm{sa}}$ be a matrix-valued length function.
Let $\B=(B_g)_{g \in G}$ be a saturated Fell bundle with unital unit fibre $B$, and let $\mathbb D_B=(B_0,\pi_B,\Hh_B,D_B)$ be a spectral triple on $B$.
Suppose that $\B$ is equipped with factor-system data that are smooth relative to $B_0$ and spectrally regular relative to $\mathbb D_B$.
Then
\begin{equation}\label{eq:lifted-triple}
    \mathbb D_{\B,\ell}
    :=
    \left(
        C_c(\B)_0,
        \pi_{\B} \otimes \id_V \otimes \id_{\C^2},
        \Hh_{\B} \otimes V \otimes \C^2,
        D_{\B,\ell}
    \right)
\end{equation}
is a spectral triple on $C_r^*(\B)$.
\end{theorem}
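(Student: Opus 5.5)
The plan is to verify the axioms of a spectral triple one at a time, using the results already assembled in Sections~\ref{sec:lifts} and~\ref{sec:construction}. Density and the \Star-subalgebra property of $C_c(\B)_0$ in $C_r^*(\B)$ were established in the subsection on the lifted algebra. The algebra is unital because $1_B\in B_0=B_e^0$, using $H_e=\C$ and $s(e)=1_B$. The representation $\pi_{\B}\otimes\id_V\otimes\id_{\C^2}$ is a unital \Star-representation. It is faithful because $\pi_{\B}$ is faithful (Section~\ref{sec:lifted_representation}) and amplification by a finite-dimensional identity preserves faithfulness. Self-adjointness and compact resolvent of $D_{\B,\ell}$ are exactly Lemma~\ref{lem:compact-resolvent}. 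It therefore remains to prove domain invariance and bounded commutators.

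By linearity it suffices to take $x\in B_g^0$ for a fixed $g\in G$, and write $\Pi(x):=\pi_{\B}(x)\otimes\id_V\otimes\id_{\C^2}$. First I would show that $\Pi(x)$ preserves $\dom(D_{\B,\ell})=(\dom(D_{\mathrm{fib}}\otimes\id_V)\cap\dom(M_\ell))\otimes\C^2$. The factor $\dom(D_{\mathrm{fib}}\otimes\id_V)$ is preserved by Lemma~\ref{lem:fibrewise-commutator}, amplified by $\id_V$ (since $V$ is finite-dimensional, $\dom(D_{\mathrm{fib}}\otimes\id_V)=\dom(D_{\mathrm{fib}})\otimes V$). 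The factor $\dom(M_\ell)$ is preserved by Lemma~\ref{lem:vertical-commutator}. The intersection is therefore invariant, and tensoring with $\C^2$ changes nothing.

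On this domain the commutator splits as
\[
    [D_{\B,\ell},\Pi(x)]
    =
    [D_{\mathrm{fib}},\pi_{\B}(x)]\otimes\id_V\otimes\sigma_1
    +
    [M_\ell,\pi_{\B}(x)\otimes\id_V]\otimes\sigma_2 .
\]
This identity holds because $\Pi(x)$ acts trivially on the $\C^2$ factor and so commutes with $\id\otimes\sigma_j$. The first term is bounded by Lemma~\ref{lem:fibrewise-commutator}, with norm $\sup_h\|\nabla_{x,h}\|$. The second term is bounded by $c_\ell(g)\|x\|$, by Lemma~\ref{lem:vertical-commutator}. Hence the commutator extends to a bounded operator.

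The only delicate point is making sure the commutator identity is justified on all of $\dom(D_{\B,\ell})$, rather than only on the dense core $\bigoplus^{\mathrm{alg}}_g(\dom(D_g)\otimes V)\otimes\C^2$. The domain invariance proved above handles this. Since $\dom(D_{\B,\ell})$ is exactly the intersection of the two individual domains (tensored with $\C^2$), each term of the sum is defined on it, so the identity holds there without any closure argument. Everything else is bookkeeping.
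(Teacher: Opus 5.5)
Your proposal is correct and follows the paper's proof essentially step for step. You check density, unitality and faithfulness of the amplified representation, take self-adjointness and compact resolvent from Lemma~\ref{lem:compact-resolvent}, and then obtain domain invariance and the splitting of the commutator into $\sigma_1$- and $\sigma_2$-parts, bounded by Lemmas~\ref{lem:fibrewise-commutator} and~\ref{lem:vertical-commutator}; your added observations that $1_B\in B_e^0=B_0$ and that $\dom(D_{\mathrm{fib}}\otimes\id_V)=\dom(D_{\mathrm{fib}})\otimes V$ only make explicit details the paper leaves implicit.
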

\begin{proof}
The algebra $C_c(\B)_0$ is a dense unital \Star-subalgebra of $C_r^*(\B)$, and the representation $\pi_{\B} \otimes \id_V \otimes \id_{\C^2}$ is faithful.
By Lemma~\ref{lem:compact-resolvent}, the operator $D_{\B,\ell}$ is self-adjoint and has compact resolvent.

It remains to verify the domain and commutator conditions.
Let $\xi \in C_c(\B)_0$.
By~Lemmas~\ref{lem:fibrewise-commutator} and~\ref{lem:vertical-commutator}, together with amplification by
$\id_V$, one has
\[
    (\pi_{\B}(\xi) \otimes \id_V \otimes \id_{\C^2})
    \dom(D_{\B,\ell})
    \subseteq
    \dom(D_{\B,\ell}).
\]
Moreover, on $\dom(D_{\B,\ell})$,
\[
    [D_{\B,\ell},\pi_{\B}(\xi) \otimes \id_V \otimes \id_{\C^2}]
    =
    [D_{\mathrm{fib}},\pi_{\B}(\xi)] \otimes \id_V \otimes \sigma_1
    +
    [M_\ell,\pi_{\B}(\xi) \otimes \id_V]
    \otimes \sigma_2.
\]
Both commutators on the right-hand side are bounded, and hence so is the commutator on the left-hand side. 
Since $\xi \in C_c(\B)_0$ was arbitrary, $\mathbb D_{\B,\ell}$ is a spectral triple on $C_r^*(\B)$.
\end{proof}

Combining Theorems~\ref{thm:factor-criterion} and~\ref{thm:main} yields the following consequence:

\begin{corollary}[Factor-system lifting criterion]
\label{cor:factor-system-main}
Let $G$ be a countable discrete group, let $V$ be a finite-dimensional Hilbert space, and let
$\ell: G \rightarrow \cL(V)_{\mathrm{sa}}$ be a matrix-valued length function.
Let $\B=(B_g)_{g \in G}$ be a saturated Fell bundle with unital unit fibre $B$, and let $\mathbb D_B=(B_0,\pi_B,\Hh_B,D_B)$ be a spectral triple on $B$.
Suppose that $\B$ is equipped with factor-system data that are smooth relative to $B_0$ and satisfy
\[
    \sup_{h \in G} \| \partial\gamma_h(x) \| < \infty
\]
for all $x \in B_0$, and
\[
    \sup_{h \in G} \| \partial\omega(g,h) \| < \infty
\]
for all $g \in G$.
Then the lifted quadruple $\mathbb D_{\B,\ell}$ in~\eqref{eq:lifted-triple} is a spectral triple on $C_r^*(\B)$.
\end{corollary}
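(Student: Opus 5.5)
This corollary is obtained by chaining the two main results already established, so the plan is essentially bookkeeping of hypotheses. First, I would observe that the hypotheses of the corollary are exactly those of Theorem~\ref{thm:factor-criterion}. These are smooth factor-system data relative to $B_0$, together with the uniform bounds $\sup_{h}\|\partial\gamma_h(x)\|<\infty$ for every $x\in B_0$ and $\sup_h\|\partial\omega(g,h)\|<\infty$ for every $g\in G$. Invoking that theorem, the factor-system data are spectrally regular relative to $\mathbb D_B$ in the sense of Definition~\ref{def:spectral-regularity}.

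Second, I would check that all remaining hypotheses of Theorem~\ref{thm:main} are in place. The following data are identical in both statements: $G$ countable discrete, $V$ finite-dimensional, $\ell$ a matrix-valued length function (proper, translation bounded, $\ell(e)=0$), $\B$ saturated with unital unit fibre, and $\mathbb D_B$ a spectral triple on $B$. Smoothness is assumed directly, and spectral regularity has just been derived.

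Third, Theorem~\ref{thm:main} applies verbatim and yields that $\mathbb D_{\B,\ell}$ in~\eqref{eq:lifted-triple} is a spectral triple on $C_r^*(\B)$.

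There is no genuine obstacle. The only point deserving a sentence is to confirm that the spectral regularity produced by Theorem~\ref{thm:factor-criterion} is the full Definition~\ref{def:spectral-regularity}, and not only its consequence for $D_{\mathrm{fib}}$. Concretely, this means domain invariance $T_x^h\dom(D_h)\subseteq\dom(D_{gh})$ and the uniform bound on $\nabla_{x,h}$ for all $x\in B_g^0$. Its proof treats generators $a_g(x\otimes\eta)$, and the bounds are linear in such generators. Since finite sums of generators exhaust $B_g^0=a_g(B_0\otimes H_g)$, both properties pass to all of $B_g^0$ by linearity and the triangle inequality.
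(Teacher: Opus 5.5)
Your proposal is correct and is exactly the paper's argument: Theorem~\ref{thm:factor-criterion} turns the uniform bounds on $\partial\gamma_h(x)$ and $\partial\omega(g,h)$ into spectral regularity, and Theorem~\ref{thm:main} then gives the lifted spectral triple. Your remark about passing from the generators $a_g(x\otimes\eta)$ to all of $B_g^0$ by linearity is valid, though the statement of Theorem~\ref{thm:factor-criterion} already asserts full spectral regularity, so it is not needed.
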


\subsection{Kasparov-product interpretation}

The lifted spectral triple also admits a Kasparov-product interpretation. 
We refer to~\cite[Secs.~17--18]{Bla19} for background on $KK$-theory and the Kasparov product.

\begin{proposition}
\label{prop:kasparov-product}
Assume the hypotheses of Theorem~\ref{thm:main}, and suppose that $B$ is separable.
Regard the spectral triple on the unit fibre as an odd unbounded Kasparov module representing $[D_B] \in KK^1(B,\C)$, and let $[M_\ell] \in KK^1(C_r^*(\B),B)$ be the class of the vertical unbounded Kasparov module associated with $\ell$, as in~\cite{MastersThesis}.

Equip $\Hh_{\B} \otimes V \otimes \C^2$ with the grading
\[
    \Gamma := -\id_{\Hh_{\B}} \otimes \id_V \otimes \sigma_3,
    \qquad
    \sigma_3=
    \begin{pmatrix}
        1 &0 \\
        0 & -1
    \end{pmatrix}.
\]
Then the lifted spectral triple, with this grading, represents
\[
    [D_{\B,\ell},\Gamma] = [M_\ell ]\otimes_B[D_B] \in KK^0(C_r^*(\B),\C).
\]
\end{proposition}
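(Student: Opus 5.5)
The plan is to use Kucerovsky's criterion for unbounded Kasparov products, in the form where an odd unbounded module over $(C_r^*(\B),B)$ is multiplied by an odd spectral triple over $(B,\C)$ (see~\cite[Sec.~18]{Bla19}). First we set up the two factors explicitly.

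The vertical class $[M_\ell]$ is represented by the odd unbounded Kasparov module $(C_c(\B),\ell^2(\B)\otimes V, M_\ell^{\B})$. Here $M_\ell^{\B}$ is the $B$-linear diagonal operator $(\xi_g)\mapsto((\id\otimes\ell(g))\xi_g)$. Self-adjointness and regularity are immediate because it is diagonal. Local compactness of the resolvent follows from properness of $\ell$ together with finite generation of each $B_g$ (Corollary~\ref{cor:projective-fibres}): each block $(1+\ell(g)^2)^{-1}$ acts on a finitely generated projective module, so it is $B$-compact, and the norms tend to zero. Bounded commutators come from Lemma~\ref{lem:vertical-commutator}, whose proof is entirely module-level. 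Under $\ell^2(\B)\otimes_B\Hh_B\cong\Hh_{\B}$ we have $M_\ell^{\B}\otimes_B\id=M_\ell$.

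Next we pass to the graded picture. Two odd classes multiply to an even class once each odd module is converted to a graded one by tensoring with the Clifford algebra $\C\ell_1$. The product is then formed on $(\Hh_{\B}\otimes V)\otimes\C^2$. We check that, with the grading $\Gamma=-\id\otimes\id_V\otimes\sigma_3$, both $\sigma_1$ and $\sigma_2$ are odd. The candidate product operator is
\[
M_\ell\otimes\sigma_2+D_{\mathrm{fib}}\otimes\id_V\otimes\sigma_1 .
\]
This is exactly $D_{\B,\ell}$, up to a unitary change of Clifford generators. That change is harmless, or else it accounts for the sign in $\Gamma$. The main bookkeeping task is to match these conventions so that the orientation comes out as stated and not as its negative.

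We then verify Kucerovsky's three conditions for $D=D_{\B,\ell}$, with $S=M_\ell\otimes\sigma_2$ as the vertical part and $T=D_{\mathrm{fib}}\otimes\id_V\otimes\sigma_1$ playing the role of a connection for $D_B$.

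\textbf{Connection condition.} For $x$ in a dense set of $\ell^2(\B)\otimes V$, the graded commutator of $D$ with $T_x:u\mapsto x\otimes u$ must be bounded on $\dom(D_B)$. We take $x\in B_g^0\otimes V$ supported in one fibre. Then $D\,T_x-T_x D_B$ reduces to $D_g T_x - T_x D_B$ plus the bounded term $\ell(g)\,T_x$. Writing $x=a_g(y\otimes\eta)$, we have $T_x=\psi_g\circ(\text{multiplication by }y\text{ tensored with }\eta)$. The term $D_g T_x - T_x D_B$ is then $\psi_g$ applied to $p(g)[D_B\otimes\id,\pi_B(y)\otimes\eta]$ plus $[D_B\otimes\id,p(g)]$ terms. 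These are bounded because $y\in B_0$ and $p(g)\in B_0\otimes\cL(H_g)$. This step uses only smoothness, not spectral regularity.

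\textbf{Domain condition.} We need $\dom(D)\subseteq\dom(S\otimes 1)$. This is immediate from the definition of $\dom(D_{\B,\ell})$.

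\textbf{Semiboundedness condition.} We need $\langle Sv,Dv\rangle+\langle Dv,Sv\rangle\ge -c\|v\|^2$. Both operators decompose over $g\in G$. On each homogeneous component $S$ anticommutes with $T$, because $\sigma_1\sigma_2=-\sigma_2\sigma_1$ and $\ell(g)$ acts on $V$ while $D_g$ acts on the other factor. Hence the cross terms cancel and the expression equals $2\|Sv\|^2\ge 0$. This is the same anticommutation computation as in Lemma~\ref{lem:compact-resolvent}.

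Separability of $B$ makes $C_r^*(\B)$ separable, since $G$ is countable and each $B_g$ is finitely generated. This is needed for the Kasparov product to be defined. Theorem~\ref{thm:main} shows that $(D_{\B,\ell},\Gamma)$ is an even spectral triple, and its odd operator anticommutes with $\Gamma$. Kucerovsky's theorem then identifies its class with $[M_\ell]\otimes_B[D_B]$.

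The main obstacle is the Clifford and grading bookkeeping that turns the two odd modules into graded ones. The semiboundedness condition collapses thanks to exact anticommutation, so the hard part is making sure the grading $\Gamma$ and the assignment of $\sigma_1$ to the fibre part and $\sigma_2$ to the length part give the stated product rather than its negative.
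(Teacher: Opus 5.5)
Your route is the paper's: Kucerovsky's criterion in odd--odd form. The connection condition is checked on $\bigl(\bigoplus_{g}^{\mathrm{alg}} B_g^0\bigr)\otimes V$, the domain condition is read off from $\dom(D_{\B,\ell})$, and positivity comes from exact Pauli anticommutation, giving $2\|S\zeta\|^2$. Your coordinate formula $T_xu=\psi_g(\pi_B(y)u\otimes\eta)$ for $x=a_g(y\otimes\eta)$ is correct. It shows that the connection condition needs only smoothness. The paper instead invokes spectral regularity at $h=e$, which at $h=e$ follows from smoothness alone, since $\omega(g,e)=p(g)$ and $\gamma_e=\id_B$. You should also record the adjoint half of the connection condition, but it is automatic. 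If $K_\xi:=D_{\mathrm{fib}}T_\xi-T_\xi D_B$ is bounded, self-adjointness gives $T_\xi^*\dom(D_{\mathrm{fib}})\subseteq\dom(D_B)$ and $D_BT_\xi^*-T_\xi^*D_{\mathrm{fib}}=-K_\xi^*$.

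The real gap is the step you defer as bookkeeping, because it is exactly what the stated grading encodes. Reversing the grading negates a class in $KK^0$. So unless you fix a reference convention, your argument only yields $[D_{\B,\ell},\Gamma]=\pm[M_\ell]\otimes_B[D_B]$. Likewise, your connection condition must be stated with a definite Pauli matrix attached to $D_B$, which you leave unspecified.

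The missing step is to fix the odd--odd normal form used by Masters, i.e. Kucerovsky's theorem as formulated by Kaad--Lesch:
\begin{itemize}
\item the left operator carries $\sigma_1$;
\item the connection term is taken against $D_B\otimes\sigma_2$;
\item the grading is $\id\otimes\sigma_3$.
\end{itemize}
Then conjugate by the self-adjoint unitary $U:=\id_{\Hh_{\B}\otimes V}\otimes(\sigma_1+\sigma_2)/\sqrt{2}$. One checks $U\sigma_1U^*=\sigma_2$ and $U\sigma_2U^*=\sigma_1$. Since $\sigma_3=-i\sigma_1\sigma_2$, this forces $U\sigma_3U^*=-i\sigma_2\sigma_1=-\sigma_3$, so
\[
    UD_{\B,\ell}U^*=M_\ell\otimes\sigma_1+D_{\mathrm{fib}}\otimes\id_V\otimes\sigma_2,
    \qquad
    U\Gamma U^*=\id\otimes\sigma_3,
\]
while $U$ commutes with $\pi_{\B}\otimes\id_V\otimes\id_{\C^2}$. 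Hence $(D_{\B,\ell},\Gamma)$ and the normal form define the same class, and Kucerovsky's three conditions should be verified for the normal form. Take $S=M_\ell\otimes\sigma_1$, and for $\xi\in B_g^0$, $w\in V$ set $T_{\xi,w}u:=\xi\otimes u\otimes w$ and $K_{\xi,w}u:=K_\xi u\otimes w$. The connection operator is then $T_{\xi,\ell(g)w}\otimes\sigma_1+K_{\xi,w}\otimes\sigma_2$, and your computations carry over verbatim.

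This also settles your alternative ``harmless, or else it accounts for the sign''. The exchange $\sigma_1\leftrightarrow\sigma_2$ is implemented by a unitary, so it does not change the class, but it necessarily flips $\sigma_3$. That is why $\Gamma$ carries the minus sign: with grading $+\id\otimes\sigma_3$, the same operator would represent $-[M_\ell]\otimes_B[D_B]$.
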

\begin{proof}
Set $A := C_r^*(\B)$ and $E := \ell^2(\B) \otimes V$.
Since $B$ is separable, $G$ is countable, and each $B_g$, $g \in G$, is finitely generated as a right $B$-module, the algebra $A$ is separable.
On the Hilbert $B$-module $E$, let $M_\ell$ denote the regular self-adjoint multiplication operator
\[
    (M_\ell \xi)_g :=(\id_{B_g} \otimes \ell(g)) \xi_g,
    \qquad
    g \in G,
\]
on its maximal domain.
The vertical unbounded Kasparov module is $(E,M_\ell)$.

Under the canonical identification $E \otimes_B \Hh_B \cong \Hh_{\B} \otimes V$, the induced operator agrees with the Hilbert-space length operator defined in~\eqref{eq:Mell}.
The notation $M_\ell$ is used for both operators.

On $\Hh_{\B}\otimes V$, write $H := D_{\mathrm{fib}} \otimes \id_V$ and $L := M_\ell$.

To use the odd--odd convention of~\cite[Sec.~I.4]{MastersThesis}, set
\[
    \widetilde D := L \otimes \sigma_1 + H \otimes \sigma_2,
    \qquad
    \widetilde \Gamma := \id \otimes \sigma_3.
\]
The unitary
\[
    U := \id_{\Hh_{\B} \otimes V} \otimes \frac{\sigma_1 + \sigma_2}{\sqrt{2}}
\]
satisfies $U D_{\B,\ell} U^* = \widetilde D$, $U \Gamma U^* = \widetilde \Gamma$ and commutes with the representation of $A$.
It therefore suffices to identify the class of $(\widetilde D,\widetilde \Gamma)$.

To this end, Kucerovsky's unbounded Kasparov-product criterion~\cite[Thm.~13]{Kuc97}, in the odd--odd formulation of~\cite[Thm.~7.2]{KaLesch13} (with the corresponding Pauli-matrix convention), is verified.

Let $\xi \in B_g^0$.
Define 
\[
    T_\xi: \Hh_B \to \Hh_{\B},
    \qquad
    T_\xi u := \xi \otimes u.
\]
Under the identification $B \otimes_B \Hh_B \cong \Hh_B$, this is the operator $T_\xi^e$ introduced before Definition~\ref{def:spectral-regularity}.
Since $D_e = D_B$, spectral regularity gives $T_\xi \dom(D_B) \subseteq \dom(D_{\mathrm{fib}})$ and a bounded extension $K_\xi := D_{\mathrm{fib}}T_\xi - T_\xi D_B$.
Self-adjointness implies the corresponding adjoint domain inclusion and the identity
\[
    D_B T_\xi^* - T_\xi^* D_{\mathrm{fib}} = -K_\xi^*
\]
on $\dom(D_{\mathrm{fib}})$.

Let $w \in V$. 
Define $T_{\xi,w}u := T_\xi u \otimes w$ and $K_{\xi,w}u := K_\xi u \otimes w$.
Then $LT_{\xi,w} = T_{\xi,\ell(g)w}$ and $HT_{\xi,w} - T_{\xi,w} D_B = K_{\xi,w}$.
Consequently,
\[
    \widetilde D(T_{\xi,w} \otimes \id_{\C^2}) - (T_{\xi,w} \otimes \id_{\C^2}) (D_B \otimes \sigma_2)
    =
    T_{\xi,\ell(g)w} \otimes \sigma_1 + K_{\xi,w} \otimes \sigma_2.
\]
The right-hand side is bounded, and so is the corresponding adjoint connection operator.
This establishes the connection condition on the dense subspace
\[
    \left(\bigoplus_{g \in G}^{\mathrm{alg}} B_g^0 \right) \otimes V \subseteq E.
\]

For the domain condition, set $S := L\otimes\sigma_1$.
By construction,
\[
    \dom(\widetilde D) = (\dom(H)\cap\dom(L)) \otimes \C^2 \subseteq \dom(S).
\]

Finally, $H$ and $L$ strongly commute: each homogeneous component reduces both operators, and their restrictions act on different tensor factors.
The Pauli anticommutation relation therefore gives
\[
    \langle S\zeta,\widetilde D\zeta\rangle + \langle\widetilde D\zeta,S\zeta\rangle = 2\|S\zeta\|^2 \geq 0
\]
for $\zeta \in \dom(\widetilde D)$.
The identity follows first on finitely supported vectors and then on the full domain by truncation.
Thus the positivity condition holds, and the Kasparov-product criterion proves the assertion.
\end{proof}

\subsection{Summability}

A summability criterion for the lifted spectral triple is established next.
Recall that a spectral triple $(C_0,\pi,\Hh,D)$ is called
\emph{$p$-summable} if $(\id_{\Hh} + D^2)^{-p/2}$ is trace class.
The summability of $\mathbb D_{\B,\ell}$ can be estimated in terms of
uniform summability bounds for the fibre operators $D_g$, $g \in G$, and the growth of the matrix-valued length function $\ell$.

\begin{proposition}\label{prop:summability}
Assume the hypotheses of Theorem~\ref{thm:main}.
Suppose that there exist $p,q>0$ and $C>0$ such that
\[
    \operatorname{Tr}((\id+D_g^2)^{-p/2}) \leq C
\]
for all $g \in G$, and
\[
    \sum_{g \in G} \operatorname{Tr}_V((\id_V+\ell(g)^2)^{-q/2}) < \infty.
\]
Then
\[
    (\id_{\Hh_{\B} \otimes V \otimes \C^2} + D_{\B,\ell}^2)^{-(p+q)/2}
\]
is trace class.
\end{proposition}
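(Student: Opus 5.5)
The proof reduces the trace to a sum over homogeneous components and then splits the exponent between the Dirac and the length directions. By the proof of Lemma~\ref{lem:compact-resolvent}, $D_{\B,\ell}=\bigoplus_{g\in G} D_{g,\ell}$. Moreover,
\[
    \id+D_{g,\ell}^2=\bigl(\id+D_g^2\otimes\id_V+\id\otimes\ell(g)^2\bigr)\otimes\id_{\C^2}.
\]
Hence, by functional calculus,
\[
    \operatorname{Tr}\bigl((\id+D_{\B,\ell}^2)^{-(p+q)/2}\bigr)
    =
    2\sum_{g\in G}\operatorname{Tr}\bigl((\id+D_g^2\otimes\id_V+\id\otimes\ell(g)^2)^{-(p+q)/2}\bigr).
\]
Trace class will follow once this sum is shown to be finite, since the operator is positive and block diagonal.

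On each component, $X:=D_g^2\otimes\id_V$ and $Y:=\id\otimes\ell(g)^2$ are commuting positive operators; they act on different tensor factors. Diagonalise $D_g^2$ with eigenvalues $\lambda_j$ (compact resolvent) and $\ell(g)^2$ with eigenvalues $\mu_k$. The operator $(\id+X+Y)^{-(p+q)/2}$ then has eigenvalues $(1+\lambda_j+\mu_k)^{-(p+q)/2}$. The key elementary inequality is
\[
    1+\lambda+\mu\ \ge\ (1+\lambda)^{p/(p+q)}(1+\mu)^{q/(p+q)}
    \qquad (\lambda,\mu\ge 0).
\]
This is weighted AM--GM, since $1+\lambda+\mu\ge \max(1+\lambda,1+\mu)$ dominates any weighted geometric mean of the two. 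It gives
\[
    (1+\lambda_j+\mu_k)^{-(p+q)/2}\le (1+\lambda_j)^{-p/2}(1+\mu_k)^{-q/2}.
\]
Summing over $j,k$ yields
\[
    \operatorname{Tr}\bigl((\id+D_g^2\otimes\id_V+\id\otimes\ell(g)^2)^{-(p+q)/2}\bigr)
    \le
    \operatorname{Tr}\bigl((\id+D_g^2)^{-p/2}\bigr)\,\operatorname{Tr}_V\bigl((\id_V+\ell(g)^2)^{-q/2}\bigr).
\]
By the uniform bound $C$, this is at most $C\,\operatorname{Tr}_V((\id_V+\ell(g)^2)^{-q/2})$. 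Summing over $g$ and using the hypothesis on $\ell$ gives a total trace at most $2C\sum_{g}\operatorname{Tr}_V((\id_V+\ell(g)^2)^{-q/2})<\infty$.

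The only point requiring care is the justification of the joint diagonalisation and the trace identity for an infinite direct sum of positive operators. I would handle it by working with orthonormal eigenbases: $D_g$ has a complete orthonormal eigenbasis, and $V$ is finite-dimensional. The tensor products of these eigenbases, taken over all $g$, form an orthonormal basis of $\Hh_{\B}\otimes V\otimes\C^2$ that diagonalises the positive operator $(\id+D_{\B,\ell}^2)^{-(p+q)/2}$. The trace is then the (possibly infinite) sum of its eigenvalues, and the bound above shows that this sum is finite. The AM--GM step is routine, so I expect no genuine obstacle.
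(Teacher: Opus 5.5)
Your proposal is correct and follows essentially the same route as the paper. You decompose $D_{\B,\ell}$ into the components $D_{g,\ell}$, apply the scalar inequality $(1+a+b)^{-(p+q)/2}\le(1+a)^{-p/2}(1+b)^{-q/2}$ to the commuting positive operators $D_g^2\otimes\id_V$ and $\id\otimes\ell(g)^2$, and sum over $g$ with the factor $2$ from $\C^2$; the only difference is that you carry out the functional-calculus step through a joint orthonormal eigenbasis, where the paper states the operator inequality directly.
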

\begin{proof}
For $a,b \geq 0$ and $r,s > 0$, one has
\[
    (1 + a + b)^{-(r + s)} \leq (1 + a)^{-r}(1 + b)^{-s}.
\]
Since $D_g^2 \otimes \id_V$ and $\id \otimes \ell(g)^2$ are commuting positive operators, functional calculus gives
\[
    (\id + D_g^2 \otimes \id_V + \id \otimes \ell(g)^2)^{-(p+q)/2}
    \leq
    (\id + D_g^2)^{-p/2} \otimes (\id_V + \ell(g)^2)^{-q/2}.
\]
Therefore,
\[
\begin{aligned}
    \operatorname{Tr}((\id + D_{\B,\ell}^2)^{-(p+q)/2})
    &\leq
    2
    \sum_{g \in G} \operatorname{Tr}((\id + D_g^2)^{-p/2}) \operatorname{Tr}_V((\id_V + \ell(g)^2)^{-q/2})
    \\
    &\leq
    2C
    \sum_{g \in G}
    \operatorname{Tr}_V((\id_V+\ell(g)^2)^{-q/2})
    < \infty,
\end{aligned}
\]
which establishes the claim.
\end{proof}

\section{Cocycle twists}
\label{sec:cocycle-twisting}

Retain the saturated Fell bundle $\B=(B_g)_{g\in G}$ with unital unit fibre $B=B_e$, the spectral triple $\mathbb D_B=(B_0,\pi_B,\Hh_B,D_B)$, and the smooth factor-system data relative to $B_0$ fixed in Section~\ref{sec:lifts}.
The stability of the lifted spectral triple under central cocycle twists of $\B$ is considered in this section.

\subsection{The cocycle-twisted Fell bundle}

The cocycle twisting of a saturated Fell bundle is recalled first,
together with the corresponding smooth algebra and the operators entering
the lifted spectral triple.

Let $\psi: G \rightarrow \Pic(B)$ be the Picard homomorphism associated with $\B$.
The induced action of $G$ on $U(Z(B))$ allows one to consider the twisted group cohomology associated with $\psi$, and in particular normalized $2$-cocycles
\[
    \nu \in Z^2(G,U(Z(B)))_\psi.
\]
For such a cocycle $\nu$, the corresponding cocycle twist, denoted by $\B_\nu$, is again a saturated Fell bundle with unit fibre $B$ and the same underlying fibres $B_g$, $g \in G$; see~\cite[Sec.~4]{MaWa25}.
Its multiplication and involution are given by
\[
    x \cdot_\nu y = \nu(g,h) xy,
    \qquad
    x^{*_\nu} = \nu(g^{-1},g)^*x^*
\]
for all $g,h \in G$, $x \in B_g$, and $y \in B_h$.
Here the products and involution on the right-hand side are those of $\B$.

To preserve the same smooth homogeneous subspaces under the cocycle twist, assume in addition that $\nu(g,h) \in B_0$ for all $g,h \in G$.
Since left multiplication by elements of $B_0$ preserves each smooth fibre $B_k^0$, $k \in G$, one has $\nu(g,h) B_k^0 \subseteq B_k^0$ for all $g,h,k \in G$.
Together with the previously established relations
\[
    B_g^0 \cdot B_h^0 \subseteq B_{gh}^0
    \qquad
    \text{and}
    \qquad
    (B_g^0)^* \subseteq B_{g^{-1}}^0
\]
for all $g,h \in G$, this gives
\[
    B_g^0 \cdot_\nu B_h^0 \subseteq B_{gh}^0
    \qquad
    \text{and}
    \qquad
    (B_g^0)^{*_\nu} \subseteq B_{g^{-1}}^0
\]
for all $g,h \in G$.
Thus
\[
    C_c(\B_\nu)_0 := \bigoplus_{g\in G}^{\mathrm{alg}}B_g^0
\]
is a dense unital \Star-subalgebra of $C_r^*(\B_\nu)$.

The cocycle twist does not change the underlying Hilbert $B$-bimodules, so that $\Hh_{\B_\nu} = \Hh_{\B}$.
Under this identification, the fibre operators $D_g$, $g \in G$, and hence $D_{\mathrm{fib}} = \bigoplus_{g \in G}D_g$, remain unchanged.
Their amplification $D_{\mathrm{fib}} \otimes \id_V$ is therefore unchanged as well.
The length operator $M_\ell$ on $\Hh_{\B}\otimes V$ depends only on the grading and the matrix-valued length function $\ell$, and is likewise unaffected.
Consequently, set
\[
    D_{\B_\nu,\ell} := D_{\B,\ell}
\]
on $\Hh_{\B}\otimes V\otimes\C^2$.
In particular, $D_{\B_\nu,\ell}$ is self-adjoint and has compact resolvent.

For $g,h \in G$ and $x \in B_g$, the operator $\pi_{\B_\nu}(x)$ maps the $h$-homogeneous component into the $gh$-homogeneous component. 
Its restriction to this component is given by 
\begin{equation}\label{eq:twisted-T} 
    T_{x,\nu}^h = L_{\nu(g,h)}^{gh} T_x^h, 
\end{equation} 
where, for $z \in B_0$ and $k \in G$, 
\[ 
    L_z^k: B_k \otimes_B \Hh_B \rightarrow B_k \otimes_B \Hh_B 
\] 
denotes the operator induced by left multiplication by $z$. 
Indeed, for $y \in B_h$ and $u \in \Hh_B$, 
\[ 
    T_{x,\nu}^h (y \otimes u) = (x \cdot_\nu y) \otimes u = \nu(g,h) xy \otimes u. 
\] 
If $z \in U(Z(B)) \cap B_0$, then $L_z^k$ is unitary for each $k \in G$. 
The argument establishing faithfulness of $\pi_{\B}$ in Section~\ref{sec:lifts} applies verbatim to $\pi_{\B_\nu}$.

\subsection{Stability of the lifted spectral triple}

Conditions ensuring that spectral regularity, and hence the lifted spectral
triple, is preserved under cocycle twisting are determined next.

Suppose from now on that the factor-system data are spectrally regular relative to $\mathbb D_B$.
Since $B_e^0 = B_0$ and $T_z^k = L_z^k$, spectral regularity applied to $z \in B_e^0$ implies
\[
    L_z^k \dom(D_k) \subseteq \dom(D_k)
    \qquad
    \text{and}
    \qquad
    [D_k,L_z^k]
    \in
    \cL(B_k \otimes_B \Hh_B)
\]
for all $z \in B_0$ and $k \in G$.

For $x\in B_g^0$, spectral regularity of the original factor-system data, together with the~domain invariance of $L_{\nu(g,h)}^{gh}$, implies 
\begin{equation}\label{eq:domain_invariance}
    T_{x,\nu}^h \dom(D_h) \subseteq \dom(D_{gh}) 
\end{equation}
for all $g,h \in G$.
Thus only uniform boundedness of the corresponding commutators remains to be controlled.

\begin{theorem}[Stability under cocycle twisting]
\label{thm:cocycle-twist}
Assume the hypotheses of Theorem~\ref{thm:main}, and let $\nu \in Z^2(G,U(Z(B)))_\psi$
be a normalized $2$-cocycle with respect to the action of $G$ on
$U(Z(B))$ induced by the Picard homomorphism $\psi: G \rightarrow \Pic(B)$. 
Suppose that $\nu(g,h) \in B_0$ for all $g,h \in G$ and that
\begin{equation}\label{eq:cocycle-uniform-bound}
    \sup_{h\in G} \| [D_{gh},L_{\nu(g,h)}^{gh}] T_x^h \| < \infty
\end{equation}
for all $g\in G$ and $x\in B_g^0$.
Then
\[
    \left(
        C_c(\B_\nu)_0,
        \pi_{\B_\nu} \otimes \id_V \otimes \id_{\C^2},
        \Hh_{\B} \otimes V \otimes \C^2,
        D_{\B_\nu,\ell}
    \right)
\]
is a spectral triple on $C_r^*(\B_\nu)$.
\end{theorem}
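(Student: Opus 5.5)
The plan is to reduce the statement to Theorem~\ref{thm:main}, applied to the twisted bundle $\B_\nu$. The only hypothesis that does not carry over automatically is spectral regularity for the twisted representation. Everything else has been prepared in the preceding discussion:
\begin{itemize}
    \item $C_c(\B_\nu)_0$ is a dense unital \Star-subalgebra of $C_r^*(\B_\nu)$;
    \item $\pi_{\B_\nu}$ is faithful, so its amplification by $\id_V \otimes \id_{\C^2}$ is faithful;
    \item $D_{\B_\nu,\ell} = D_{\B,\ell}$ is self-adjoint with compact resolvent by Lemma~\ref{lem:compact-resolvent}.
\end{itemize}
Rather than formally redefining factor-system data for $\B_\nu$, I would rerun the proof of Theorem~\ref{thm:main} verbatim. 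That proof uses only two inputs: the conclusion of Lemma~\ref{lem:fibrewise-commutator} for the representation in question, and Lemma~\ref{lem:vertical-commutator}. So it suffices to establish both for $\pi_{\B_\nu}$.

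For the fibrewise part, fix $g \in G$ and $x \in B_g^0$. By \eqref{eq:twisted-T}, $\pi_{\B_\nu}(x) = \bigoplus_h T_{x,\nu}^h$, and \eqref{eq:domain_invariance} gives $T_{x,\nu}^h \dom(D_h) \subseteq \dom(D_{gh})$. On $\dom(D_h)$ the commutator splits as
\[
    D_{gh} T_{x,\nu}^h - T_{x,\nu}^h D_h
    =
    [D_{gh},L_{\nu(g,h)}^{gh}] T_x^h
    +
    L_{\nu(g,h)}^{gh} \nabla_{x,h}.
\]
This identity makes sense because $T_x^h$ maps $\dom(D_h)$ into $\dom(D_{gh})$, and $L_{\nu(g,h)}^{gh}$ preserves $\dom(D_{gh})$ with bounded commutator, both by spectral regularity applied to $\nu(g,h) \in B_0 = B_e^0$.

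Each term is bounded uniformly in $h$:
\begin{itemize}
    \item The first term is uniformly bounded by hypothesis \eqref{eq:cocycle-uniform-bound}.
    \item In the second term, $L_{\nu(g,h)}^{gh}$ is unitary, since $\nu(g,h) \in U(Z(B))\cap B_0$, and $\sup_h \|\nabla_{x,h}\| < \infty$ by the assumed spectral regularity.
\end{itemize}
The argument of Lemma~\ref{lem:fibrewise-commutator} then applies word for word. The maps $h \mapsto gh$ give orthogonal source and target summands, so $\pi_{\B_\nu}(x)\dom(D_{\mathrm{fib}}) \subseteq \dom(D_{\mathrm{fib}})$ and $[D_{\mathrm{fib}},\pi_{\B_\nu}(x)]$ is bounded with norm $\sup_h$ of the fibrewise norms. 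Linearity extends this to all of $C_c(\B_\nu)_0$.

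For the vertical part, the proof of Lemma~\ref{lem:vertical-commutator} only uses $\|T_{x,\nu}^h\| \le \|x\|$ and the fact that $T_{x,\nu}^h$ maps the $h$-component to the $gh$-component. Both hold here, since $L_{\nu(g,h)}^{gh}$ is unitary. Hence $[M_\ell,\pi_{\B_\nu}(x)\otimes\id_V]$ is bounded by $c_\ell(g)\|x\|$, with the required domain invariance. The commutator with $D_{\B_\nu,\ell}$ then decomposes into a $\sigma_1$-part and a $\sigma_2$-part exactly as in the proof of Theorem~\ref{thm:main}.

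The main subtle step is the domain bookkeeping in the splitting identity. One must check that $L_{\nu(g,h)}^{gh}$ preserves $\dom(D_{gh})$ and that the identity holds on all of $\dom(D_h)$ rather than only on a core. Both follow from spectral regularity of the untwisted data applied at the unit fibre, as recorded just before the theorem. A second point is that the bounded extension of $[D_{gh},L^{gh}_{\nu(g,h)}]$ composed with $T_x^h$ agrees with the operator appearing in \eqref{eq:cocycle-uniform-bound}, so that the uniform bound can be used as stated.
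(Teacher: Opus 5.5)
Your proposal is correct and follows essentially the same route as the paper. Both use the splitting $\nabla^\nu_{x,h} = [D_{gh},L^{gh}_{\nu(g,h)}]T^h_x + L^{gh}_{\nu(g,h)}\nabla_{x,h}$ together with unitarity of $L^{gh}_{\nu(g,h)}$ and the domain invariance coming from spectral regularity at the unit fibre, rerun the arguments of Lemmas~\ref{lem:fibrewise-commutator} and~\ref{lem:vertical-commutator} for $\pi_{\B_\nu}$ using $\|T^h_{x,\nu}\|\le\|x\|$, and then assemble the $\sigma_1$- and $\sigma_2$-parts as in Theorem~\ref{thm:main}.
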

\begin{proof}
Fix $g \in G$ and $x \in B_g^0$. 
For each $h \in G$, define on
$\dom(D_h)$
\[
    \nabla_{x,h}^\nu := D_{gh} T_{x,\nu}^h - T_{x,\nu}^h D_h.
\]
Using~\eqref{eq:twisted-T}, one finds that
\[
    \nabla_{x,h}^\nu = [D_{gh},L_{\nu(g,h)}^{gh}] T_x^h + L_{\nu(g,h)}^{gh} (D_{gh} T_x^h - T_x^h D_h).
\]
Both terms on the right-hand side extend to bounded operators, and hence so does $\nabla_{x,h}^\nu$. 
Since $L_{\nu(g,h)}^{gh}$ is unitary, its bounded extension satisfies
\[ 
    \| \nabla_{x,h}^\nu \| 
    \leq 
    \| [D_{gh},L_{\nu(g,h)}^{gh}] T_x^h \| + \| \nabla_{x,h} \|. 
\] 
It follows from this estimate, Condition~\eqref{eq:cocycle-uniform-bound}, and spectral regularity that
\[ 
    \sup_{h \in G} \| \nabla_{x,h}^\nu \| < \infty. 
\]
Combined with the domain invariance in~\eqref{eq:domain_invariance}, this shows that the twisted homogeneous operators satisfy the spectral-regularity conditions.
Consequently,
\[
    \pi_{\B_\nu}(C_c(\B_\nu)_0)\dom(D_{\mathrm{fib}})
    \subseteq
    \dom(D_{\mathrm{fib}})
    \qquad
    \text{and}
    \qquad
    [D_{\mathrm{fib}},\pi_{\B_\nu}(\xi)]
    \in
    \cL(\Hh_{\B})
\]
for all $\xi \in C_c(\B_\nu)_0$.

For all $g,h \in G$ and $x \in B_g$, $\| T_{x,\nu}^h \| = \| T_x^h \| \leq \| x \|$, by~\eqref{eq:twisted-T} and the unitarity of $L_{\nu(g,h)}^{gh}$.
For such $g$ and $x$, the reasoning used in the proof of Lemma~\ref{lem:vertical-commutator} therefore yields
\[
    (\pi_{\B_\nu}(x) \otimes \id_V) \dom(M_\ell)
    \subseteq
    \dom(M_\ell)
    \qquad
    \text{and}
    \qquad
    \| [M_\ell,\pi_{\B_\nu}(x) \otimes \id_V] \|
    \leq
    c_\ell(g) \| x \|,
\]
where
\[
    c_\ell(g) := \sup_{h \in G} \| \ell(gh) - \ell(h) \|.
\]
Linearity extends the corresponding domain invariance and boundedness to all $\xi \in C_c(\B_\nu)_0$.

For all $\xi \in C_c(\B_\nu)_0$, the preceding domain-invariance and commutator-boundedness statements imply, respectively,
\[
    (\pi_{\B_\nu}(\xi) \otimes \id_V \otimes \id_{\C^2})
    \dom(D_{\B_\nu,\ell})
    \subseteq
    \dom(D_{\B_\nu,\ell}),
\]
and that, on $\dom(D_{\B_\nu,\ell})$,
\[
    [D_{\B_\nu,\ell},\pi_{\B_\nu}(\xi) \otimes \id_V \otimes \id_{\C^2}]
    =
    [D_{\mathrm{fib}},\pi_{\B_\nu}(\xi)] \otimes \id_V \otimes \sigma_1 + [M_\ell,\pi_{\B_\nu}(\xi) \otimes \id_V] \otimes \sigma_2,
\]
extends to a bounded operator on $\Hh_{\B} \otimes V \otimes \C^2$.

The fact that $C_c(\B_\nu)_0$ is a dense unital \Star-subalgebra of $C_r^*(\B_\nu)$, the faithfulness of $\pi_{\B_\nu}$, and the self-adjointness and compact resolvent of $D_{\B_\nu,\ell}$ were established above. 
Hence
\[
    \left(
        C_c(\B_\nu)_0,
        \pi_{\B_\nu} \otimes \id_V \otimes \id_{\C^2},
        \Hh_{\B} \otimes V \otimes \C^2,
        D_{\B_\nu,\ell}
    \right)
\]
is a spectral triple on $C_r^*(\B_\nu)$.
\end{proof}

A convenient sufficient condition for~\eqref{eq:cocycle-uniform-bound} is obtained by estimating the cocycle commutators independently of $x$. Indeed, for $g,h\in G$ and $x\in B_g^0$,
\[
    \|[ D_{gh},L_{\nu(g,h)}^{gh}] T_x^h \|
    \leq
    \| [D_{gh},L_{\nu(g,h)}^{gh}] \| \| T_x^h \|
    \leq
    \| [D_{gh},L_{\nu(g,h)}^{gh}] \| \| x \|.
\]
Hence~\eqref{eq:cocycle-uniform-bound} follows whenever
\[
    \sup_{h\in G} \| [D_{gh},L_{\nu(g,h)}^{gh}] \| < \infty
\]
for all $g \in G$. 
This gives the following consequence:

\begin{corollary}\label{cor:cocycle-uniform}
Assume the hypotheses of Theorem~\ref{thm:main}, and let $\nu \in Z^2(G,U(Z(B)))_\psi$
be a normalized $2$-cocycle with respect to the action of $G$ on
$U(Z(B))$ induced by the Picard homomorphism $\psi: G \rightarrow \Pic(B)$.
Suppose that $\nu(g,h) \in B_0$ for all $g,h \in G$ and that
\[
    \sup_{h\in G} \|[ D_{gh},L_{\nu(g,h)}^{gh}] \| <\infty
\]
for all $g \in G$. 
Then
\[
    \left(
        C_c(\B_\nu)_0,
        \pi_{\B_\nu} \otimes \id_V \otimes \id_{\C^2},
        \Hh_{\B} \otimes V \otimes \C^2,
        D_{\B_\nu,\ell}
    \right)
\]
is a spectral triple on $C_r^*(\B_\nu)$.

In particular, the same conclusion holds if $[D_{gh},L_{\nu(g,h)}^{gh}] = 0$ for all $g,h \in G$.
\end{corollary}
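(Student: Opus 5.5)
The plan is to reduce the statement to Theorem~\ref{thm:cocycle-twist}: we show that the uniform hypothesis on the cocycle commutators implies condition~\eqref{eq:cocycle-uniform-bound}. Fix $g\in G$ and $x\in B_g^0$. For each $h\in G$, the operator $L_{\nu(g,h)}^{gh}$ is $T_z^{gh}$ with $z=\nu(g,h)\in B_0=B_e^0$. As recorded before Theorem~\ref{thm:cocycle-twist}, spectral regularity then gives two facts. First, $L_{\nu(g,h)}^{gh}$ preserves $\dom(D_{gh})$. Second, the commutator $[D_{gh},L_{\nu(g,h)}^{gh}]$ extends to a bounded operator on $B_{gh}\otimes_B\Hh_B$. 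Hence the composite $[D_{gh},L_{\nu(g,h)}^{gh}]T_x^h$ is meaningful on $\dom(D_h)$, because $T_x^h\dom(D_h)\subseteq\dom(D_{gh})$, and it is given there by the bounded extension of the commutator.

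The key estimate is the one displayed just before the corollary. Using $\|T_x^h\|\le\|x\|$, we get
\[
    \|[D_{gh},L_{\nu(g,h)}^{gh}]T_x^h\|\le\|[D_{gh},L_{\nu(g,h)}^{gh}]\|\,\|x\|.
\]
Taking the supremum over $h\in G$ and applying the hypothesis $\sup_{h}\|[D_{gh},L_{\nu(g,h)}^{gh}]\|<\infty$ yields~\eqref{eq:cocycle-uniform-bound}. The remaining hypotheses of Theorem~\ref{thm:cocycle-twist} are assumed verbatim: those of Theorem~\ref{thm:main}, the normalized cocycle $\nu$, and $\nu(g,h)\in B_0$. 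Theorem~\ref{thm:cocycle-twist} therefore gives the spectral triple on $C_r^*(\B_\nu)$.

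For the final clause, if $[D_{gh},L_{\nu(g,h)}^{gh}]=0$ for all $g,h$, then the supremum above is $0$, and the general case applies. No step is a genuine obstacle. The only point requiring care is the first paragraph: one must interpret the commutator norm via its bounded extension and check that composing with $T_x^h$ is legitimate on $\dom(D_h)$. Both follow from the domain invariance derived from spectral regularity with $z\in B_e^0$.
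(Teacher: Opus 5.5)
Your proposal is correct and is essentially the paper's argument: the paper deduces the corollary from Theorem~\ref{thm:cocycle-twist} via the estimate $\|[D_{gh},L_{\nu(g,h)}^{gh}]T_x^h\|\le\|[D_{gh},L_{\nu(g,h)}^{gh}]\|\,\|T_x^h\|\le\|[D_{gh},L_{\nu(g,h)}^{gh}]\|\,\|x\|$, which turns the $x$-independent uniform bound into condition~\eqref{eq:cocycle-uniform-bound}. Your remarks on bounded extensions and domains match the observations the paper records just before that theorem, namely $T_z^k=L_z^k$ for $z\in B_0=B_e^0$ together with spectral regularity.
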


\section{Examples}
\label{sec:examples}

The construction is now illustrated by a number of examples emphasizing
different features of the general framework.
For trivial Fell bundles, the lifted spectral triple reduces to the usual product construction, while twisted crossed products recover the standard spectral triples associated with matrix-valued length functions on the acting group.
Fell line bundles provide the corresponding twisted group C\Star-algebras as a particularly simple special case.

The finite-group and free-group examples exhibit nonfree homogeneous fibres.
Examples arising from free compact Abelian actions provide further comparisons: the quantum $4$-torus illustrates Clifford-valued vertical geometry, while the quantum Hopf fibration illustrates nontrivial projective spectral subspaces, subject to the stated analytic hypotheses.

Finally, the gauge grading of the Cuntz algebras illustrates the interaction between the growth of the fibre multiplicities and the summability properties of the lifted operator.

\subsection{The trivial Fell bundle}

Let $B$ be a unital C\Star-algebra equipped with a spectral triple $\mathbb D_B=(B_0,\pi_B,\Hh_B,D_B)$, let $G$ be a countable discrete group, let $V$ be a finite-dimensional Hilbert space, and let $\ell: G \rightarrow \cL(V)_{\mathrm{sa}}$ be a matrix-valued length function.

Consider the trivial Fell bundle $\B=(B_g)_{g\in G}$ over $G$,
defined by $B_g=B$ for all $g \in G$, with multiplication and involution inherited from $B$.
Its reduced cross-sectional algebra is canonically isomorphic to
\[
    C_r^*(\B) \cong B \otimes_{\min} C_r^*(G).
\]
Moreover,
\[
    \Hh_{\B} \cong \Hh_B\otimes\ell^2(G).
\]

Under these identifications, the representation $\pi_{\B}$ is identified with $\pi_B \otimes \lambda$, where $\lambda$ denotes the left regular representation of $G$. 
More precisely, for $x \in B_g = B$, $\pi_{\B}(x) = \pi_B(x)\otimes\lambda_g$.

For each $g \in G$, choose $H_g = \C$ and  $s(g) = 1_B$.
The corresponding factor-system data are $p(g)=1_B$, $\gamma_g = \id_B$, and $\omega(g,h) = 1_B$ for all $g,h \in G$, and are smooth relative to $B_0$.

Since $a_g = \id_B$ for all $g \in G$, $B_g^0 = B_0$ for all $g \in G$, and therefore
\[
    C_c(\B)_0 \cong B_0 \otimes C_c(G).
\]

For all $g \in G$, identify $B_g \otimes_B \Hh_B \cong \Hh_B$, $y \otimes u \mapsto \pi_B(y)u$.
With this convention, $D_g = D_B$ for all $g \in G$. 
Moreover, for all $g,h \in G$ and $x \in B_g^0=B_0$, the operator $T_x^h$ is identified
with $\pi_B(x)$, and hence
\[
    \nabla_{x,h} = [D_B,\pi_B(x)].
\]
The domain-invariance and boundedness conditions follow from the spectral
triple $\mathbb D_B$, and the right-hand side is independent of $h$.
Thus the factor-system data are spectrally regular relative to
$\mathbb D_B$.

Since $D_g = D_B$ for all $g\in G$, the fibrewise operator becomes
\[
    D_{\mathrm{fib}} = D_B \otimes \id_{\ell^2(G)}.
\]
On $\ell^2(G)\otimes V$, let $M_\ell$ denote the multiplication operator
defined by
\[
    M_\ell(\delta_g \otimes v) = \delta_g \otimes \ell(g)v
\]
for all $g \in G$ and $v \in V$.
Under the identification $\Hh_{\B} \otimes V \cong \Hh_B \otimes \ell^2(G) \otimes V$, the length operator is identified with $\id_{\Hh_B} \otimes M_\ell$.
Therefore,
\[
    D_{\B,\ell}
    =
    D_B \otimes \id_{\ell^2(G)} \otimes \id_V \otimes \sigma_1 + \id_{\Hh_B} \otimes M_\ell \otimes \sigma_2.
\]
Theorem~\ref{thm:main} gives the spectral triple
\[
    \left(
        B_0 \otimes C_c(G),
        \pi_B \otimes \lambda \otimes \id_V \otimes \id_{\C^2},
        \Hh_B \otimes \ell^2(G) \otimes V \otimes \C^2,
        D_{\B,\ell}
    \right)
\]
on $B \otimes_{\min} C_r^*(G)$.
Thus, for the trivial Fell bundle, the construction reduces to the product of the spectral triple on $B$ with the spectral data determined by the matrix-valued length function on $G$.

\subsection{Twisted crossed products}

Let $B$ be a unital C\Star-algebra equipped with a spectral triple $\mathbb D_B=(B_0,\pi_B,\Hh_B,D_B)$, let $G$ be a countable discrete group, let $V$ be a finite-dimensional Hilbert space, and let $\ell: G \rightarrow \cL(V)_{\mathrm{sa}}$ be a matrix-valued length function.

A normalized twisted action of $G$ on $B$ is a pair $(\rho,\sigma)$ consisting of maps
\[
    \rho: G \rightarrow \operatorname{Aut}(B),
    \qquad
    \sigma: G \times G \rightarrow U(B),
\]
where $U(B)$ denotes the unitary group of $B$, such that 
\[
    \rho_g \circ \rho_h
    =
    \operatorname{Ad}(\sigma(g,h)) \circ \rho_{gh},
\]
and
\[
    \rho_g(\sigma(h,k)) \sigma(g,hk)
    =
    \sigma(g,h) \sigma(gh,k)
\]
for all $g,h,k \in G$, normalized by 
\[
    \rho_e = \id_B,
    \qquad
    \sigma(e,g) = \sigma(g,e) = 1_B
\]
for all $g \in G$.

For each $g \in G$, let $u_g$ be a formal symbol. 
The twisted action determines a saturated Fell bundle $\B=(B_g)_{g \in G}$ with $B_g = Bu_g$, whose multiplication and involution are given by
\[
    (xu_g) (yu_h) = x \rho_g(y) \sigma(g,h) u_{gh}
\]
and
\[
    (xu_g)^* = \sigma(g^{-1},g)^* \rho_{g^{-1}}(x^*)u_{g^{-1}}
\]
for all $g,h \in G$ and $x,y \in B$.
Its reduced cross-sectional algebra is canonically isomorphic to the reduced twisted crossed product,
\[
    C_r^*(\B) \cong B \rtimes_{\rho,\sigma,r} G.
\]

The relation with the factor-system description from Section~\ref{sec:isometric-factor-systems} is particularly simple.
For each $g \in G$, choose $H_g = \C$ and $s(g) = u_{g^{-1}} \in B_{g^{-1}}$.
The twisted-action relations imply $s(g)^* s(g) = 1_B$, so that $p(g)=1_B$. 
The corresponding factor-system data are
\[
    \gamma_g(x) = \rho_{g^{-1}}(x)
\]
for all $g \in G$ and $x \in B$, and
\[
    \omega(g,h) = \sigma(h^{-1},g^{-1})^*
\]
for all $g,h \in G$.

Suppose now that the twisted action is smooth relative to $B_0$, in the sense that
\[
    \rho_g(B_0) = B_0
    \qquad
    \text{and}
    \qquad
    \sigma(g,h) \in U(B_0)
\]
for all $g,h \in G$.
The formulas above imply that the associated factor-system data are smooth relative to $B_0$. 

The coordinate map $a_g$ is given by
\[
    a_g(x) = s(g)^*x = \sigma(g,g^{-1})^* \rho_g(x )u_g
\]
for all $g \in G$ and $x \in B$. 
Hence $B_g^0 = B_0u_g$ for all $g \in G$, and consequently
\[
    C_c(\B)_0 \cong C_c(G,B_0).
\]

For each $k \in G$, the unitary $W_k$ from~\eqref{eq:W_g} identifies $\Hh_B$ with $B_k \otimes_B \Hh_B$. 
More explicitly,
\[
    W_k v
    =
    \sigma(k,k^{-1})^* u_k \otimes v
\]
for all $v \in \Hh_B$, while
\[
    W_k^*((yu_k) \otimes v)
    =
    \pi_B(\rho_{k^{-1}}(y) \sigma(k^{-1},k))v
\]
for all $y \in B$ and $v \in \Hh_B$.
For each $k \in G$, the corresponding fibre operator satisfies $D_k = W_k D_B W_k^*$.
Taking the direct sum of the unitaries $W_k$, $k \in G$, defines a unitary
\[
    W: \Hh_B \otimes \ell^2(G) \rightarrow \Hh_{\B},
    \qquad
    W(v \otimes \delta_k) := W_k v,
\]
and, because $D_{\mathrm{fib}} = \bigoplus_{k \in G} D_k$,
\[
    W^* D_{\mathrm{fib}} W = D_B \otimes \id_{\ell^2(G)}.
\]

For $g,h \in G$ and $x \in B$, the homogeneous operator $T_{xu_g}^h$ satisfies
\[
    W_{gh}^* T_{xu_g}^h W_h
    =
    \pi_B(\rho_{h^{-1}g^{-1}}(x) \sigma(h^{-1}g^{-1},g)),
\]
where the twisted-action identities have been used to obtain the right-hand side. 
Consequently,
\[
    W^*\pi_{\B}(xu_g) W(v \otimes \delta_h)
    =
    \pi_B(\rho_{h^{-1}g^{-1}}(x) \sigma(h^{-1}g^{-1},g)) v \otimes \delta_{gh}
\]
for all $v \in \Hh_B$.

Let $g \in G$ and $x \in B_0$, so that $xu_g \in B_g^0$.
For each $ h\in G$, the preceding identities give
\[
    W_{gh}^* \nabla_{xu_g,h} W_h
    =
    [D_B,\pi_B(\rho_{h^{-1}g^{-1}}(x) \sigma(h^{-1}g^{-1},g))].
\]
Since the twisted action is smooth relative to $B_0$,
\[
    \rho_{h^{-1}g^{-1}}(x)
    \sigma(h^{-1}g^{-1},g)
    \in B_0.
\]
Hence the domain-invariance condition and boundedness of $\nabla_{xu_g,h}$ for each $h\in G$ follow from the defining properties of the spectral triple $\mathbb D_B$.

For fixed $g\in G$, the map $h \mapsto h^{-1}g^{-1}$ is a bijection of $G$. 
Therefore,
\[
    \sup_{h \in G} \| \nabla_{xu_g,h} \|
    =
    \sup_{r \in G} \| [D_B,\pi_B(\rho_r(x)\sigma(r,g))] \|.
\]
Consequently, the factor-system data are spectrally regular relative to $\mathbb D_B$ provided that
\[
    \sup_{r \in G} \| [D_B,\pi_B(\rho_r(x)\sigma(r,g))] \| < \infty
\]
for all $g \in G$ and $x \in B_0$.
This is precisely the uniform boundedness condition in~\cite[Setup~3.1]{AnGuIsRu22}.

The length operator is carried by $W \otimes \id_V$ to $\id_{\Hh_B} \otimes M_\ell$, where $M_\ell$ is the multiplication operator on $\ell^2(G) \otimes V$ defined by $M_\ell(\delta_g \otimes v) = \delta_g \otimes \ell(g)v$ for all $g \in G$ and $v \in V$.
Set
\[
    \widetilde D_{\B,\ell} := (W^* \otimes \id_V \otimes \id_{\C^2}) D_{\B,\ell} (W \otimes \id_V \otimes \id_{\C^2}).
\]
Then
\[
    \widetilde D_{\B,\ell} = D_B \otimes \id_{\ell^2(G)} \otimes \id_V \otimes \sigma_1 + \id_{\Hh_B} \otimes M_\ell \otimes \sigma_2.
\]

Under the uniform boundedness condition above, Theorem~\ref{thm:main} yields the spectral triple
\[
    \left(
        C_c(G,B_0),
        W^*\pi_{\B}(\,\cdot\,)W \otimes \id_V \otimes \id_{\C^2},
        \Hh_B \otimes \ell^2(G) \otimes V \otimes \C^2,
        \widetilde D_{\B,\ell}
    \right)
\]
on the reduced twisted crossed product $B \rtimes_{\rho,\sigma,r} G$.
This agrees with the corresponding twisted crossed-product construction
in~\cite[Thm.~3.7]{AnGuIsRu22}.

\subsubsection{Fell line bundles}

As a special case, let $B = \C$ and let $\sigma \in Z^2(G,\mathbb T)$ be a normalized~$2$-cocycle.
The associated Fell bundle is a Fell line bundle, and
\[
    C_r^*(\B) \cong C_r^*(G,\sigma).
\]
Taking the canonical spectral triple of a point,
\[
    \mathbb D_{\C}=(\C,\id_{\C},\C,0),
\]
on the unit fibre, the fibre operators vanish, $D_g = 0$ for all $g\in G$, and spectral regularity is automatic.
The construction above therefore reduces to
the spectral triple
\[
    \left(
        C_c(G,\sigma),
        \lambda_\sigma \otimes\id_V \otimes \id_{\C^2},
        \ell^2(G) \otimes V \otimes \C^2,
        M_\ell \otimes \sigma_2
    \right)
\]
on $C_r^*(G,\sigma)$.

For $G = \Z^n$, Clifford-valued length functions provide the standard Dirac-type geometry in the group direction; see~\cite[Ex.~3.3]{AnGuIsRu22}.

\subsection{Finite Fell bundles}

Suppose that $G$ is finite. 
In this case, smoothness of the factor-system data already implies the uniform estimates in Corollary~\ref{cor:factor-system-main}. 
Indeed, for fixed $x \in B_0$ and $g \in G$, the families
\[
    \bigl(\partial\gamma_h(x)\bigr)_{h \in G}
    \qquad
    \text{and}
    \qquad
    \bigl(\partial\omega(g,h)\bigr)_{h \in G}
\]
contain only finitely many bounded operators. 
Hence every smooth factor system over a finite group is spectrally regular, and Theorem~\ref{thm:main} yields a spectral triple on $C_r^*(\B)$.

A natural class of examples is provided by free actions of finite Abelian groups. 
Let 
\[
    \alpha:K \rightarrow \operatorname{Aut}(A),
    \qquad
    k \mapsto \alpha_k,
\]
be an action of a finite Abelian group $K$ on a unital C\Star-algebra $A$, and let
\[
    B := A^K = \{x \in A: (\forall k \in K) \, \alpha_k(x) = x\}
\]
be the fixed-point algebra.
For each $\chi \in \widehat K$, consider the spectral subspace
\[
    A_\chi := \{x \in A: (\forall k \in K) \, \alpha_k(x) = \chi(k)x\}.
\]
Recall that, in this setting, freeness of the action is equivalent to
saturation of the spectral decomposition; equivalently, $\overline{A_\chi^* A_\chi} = B$ for all $\chi \in \widehat K$.
Hence the spectral subspaces~$(A_\chi)_{\chi \in \widehat K}$ form a saturated Fell bundle, with unit fibre $B$, and its reduced cross-sectional algebra is canonically isomorphic to $A$.

Let $\mathbb D_B$ be a spectral triple on $B$. 
If the associated factor-system data are smooth relative to $B_0$, then spectral regularity is automatic, because $\widehat K$ is finite. 
Therefore, Theorem~\ref{thm:main} yields a spectral triple on $A$.

Notably, although the action is free, the individual right $B$-modules $A_\chi$ need not be~free.
In general, the spectral subspaces are finitely generated projective $B$-modules, and the construction applies equally to nontrivial projective modules rather than only to free modules of rank one.

A finite non-Abelian example that does not arise from a twisted action is given next.

Let $K_1 := \C$, $K_2 := \C$, and let $K_3 := \C^2$, and set
\[
    F := \bigoplus_{i=1}^3 \cL(K_i) \cong \C \oplus \C \oplus M_2(\C).
\]
Note that $F\cong C^*(S_3)$. 
However, this identification will not be used below; the spaces $K_i$, $i=1,2,3$, are introduced to produce homogeneous fibres containing rectangular matrix blocks.

The symmetric group $S_3$ acts on $\{1,2,3\}$ in the usual way. 
For $g \in S_3$, define
\[
    E_g := \bigoplus_{i=1}^3 \cL(K_i,K_{g(i)}).
\]
In particular, $E_e = F$.
For $x=(x_i) \in E_g$ and $y=(y_i) \in E_h$, define multiplication and involution by
\[
    (xy)_i := x_{h(i)}y_i,
    \qquad
    (x^*)_i := x_{g^{-1}(i)}^*.
\]
These operations turn $\mathcal E := (E_g)_{g\in S_3}$ into a Fell bundle over $S_3$.

The bundle is saturated. 
Indeed, for $g,h \in S_3$ and $i \in \{1,2,3\}$, the $i$-th component of $E_gE_h$ is the linear span of operators of the form
\[
    ab,
    \qquad
    a \in \cL(K_{h(i)},K_{gh(i)}),
    \qquad
    b \in \cL(K_i,K_{h(i)}).
\]
Since $K_{h(i)}\neq0$, finite-dimensional matrix factorization gives
\[
    \operatorname{span}
    (\cL(K_{h(i)},K_{gh(i)}) \cL(K_i,K_{h(i)})) = \cL(K_i,K_{gh(i)}).
\]
Consequently, $[E_gE_h] = E_{gh}$ for all $g,h \in S_3$.

This Fell bundle is not of crossed-product type. 
To see this, consider the transposition $\tau=(13)$.
Then
\[
    E_\tau = \cL(\C,\C^2) \oplus \cL(\C,\C) \oplus \cL(\C^2,\C)
    \cong
    M_{2,1}(\C)\oplus\C\oplus M_{1,2}(\C).
\]
If $u=(u_1,u_2,u_3) \in E_\tau$ satisfied $u^*u = 1_F$, then necessarily $u_3^* u_3 = 1_{M_2(\C)}$.
This is~impossible, because $u_3 \in M_{1,2}(\C)$ and therefore $\operatorname{rank}(u_3^* u_3) \leq 1$.
Thus $E_\tau$ contains no unitary, and $\mathcal E$ is not a crossed-product Fell bundle.

For $\tau = (13)$, choose $H_\tau := \C^2$, with orthonormal basis $e_1,e_2$, and define $x_1,x_2 \in E_\tau$ by
\[
    x_1
    :=
    \left(
        \begin{pmatrix}
        1\\0
        \end{pmatrix},
        1,
        \begin{pmatrix}
        1&0
        \end{pmatrix}
    \right),
    \qquad
    x_2
    :=
    \left(
        \begin{pmatrix}
        0\\0
        \end{pmatrix},
        0,
        \begin{pmatrix}
        0&1
        \end{pmatrix}
    \right).
\]
Then
\[
    x_1^*x_1+x_2^*x_2=1_F,
\]
and hence
\[
    s(\tau) := x_1 \otimes e_1 + x_2 \otimes e_2 \in E_{\tau^{-1}} \otimes H_\tau
\]
is an isometry.

The associated projection $p(\tau) := s(\tau) s(\tau)^*$ is given, with respect to the decomposition
$F = \C \oplus \C \oplus M_2(\C)$, by
\[
    p(\tau)
    =
    \left(
        \begin{pmatrix}
            1&0\\
            0&1
        \end{pmatrix},
        \begin{pmatrix}
            1&0\\
            0&0
        \end{pmatrix},
        \begin{pmatrix}
            E_{11}&0\\
            0&0
        \end{pmatrix}
    \right)
    \in F\otimes M_2(\C).
\]
In particular,
\[
    p(\tau)\neq1_{F\otimes M_2(\C)}.
\]
The nontrivial projection $p(\tau)$ therefore gives an explicit factor-system realization of the nonfree homogeneous fibre $E_\tau$.

To obtain a nontrivial spectral example, let $C := C(\mathbb T)$, and consider the standard spectral triple
\[
    \mathbb D_C
    =
    \left(
        C^\infty(\mathbb T),
        \pi_C,
        L^2(\mathbb T),
        -i\frac{d}{d\theta}
    \right).
\]
Define
\[
    B_g := C(\mathbb T) \otimes E_g,
    \qquad
    g \in S_3.
\]
Then $\B=(B_g)_{g \in S_3}$ is a saturated Fell bundle with unit fibre $B :=  C(\mathbb T) \otimes F$.
Choose $B_0 := C^\infty(\mathbb T) \otimes F$ as the smooth subalgebra of $B$.
Let
\[
    K:=K_1 \oplus K_2 \oplus K_3
\]
and let $\pi_F$ denote the canonical representation of $F$ on $K$.
Then
\[
    \mathbb D_B
    :=
    \left(
        B_0,
        \pi_C\otimes\pi_F,
        L^2(\mathbb T)\otimes K,
        -i\frac{d}{d\theta}\otimes\id_K
    \right)
\]
is a spectral triple on $B$.

Choose factor-system data for $\mathcal E$ and define, for each $g \in S_3$, $s_B(g) := 1_C \otimes s(g)$.
Then, for each $g\in S_3$, $p_B(g) = 1_C \otimes p(g)$, and, for all $f \in C^\infty(\mathbb T)$ and $x \in F$,
\[
    \gamma_g^B(f \otimes x) = f \otimes \gamma_g^F(x).
\]
Moreover, for all $g,h \in S_3$,
\[
    \omega_B(g,h) = 1_C \otimes \omega_F(g,h).
\]
Since the factor-system data are constant in the $\mathbb T$-direction
and take values in the finite-dimensional algebra $F$, they preserve $B_0$.
Hence the induced factor-system data for $\B$ are smooth relative to
$B_0$.

It follows that, for all $g,h \in S_3$,
\[
    \partial\omega_B(g,h) = (D_B \otimes \id) \omega_B(g,h) - \omega_B(g,h) (D_B \otimes \id) = 0,
\]
while, for all $g \in S_3$, $f \in C^\infty(\mathbb T)$, and $x \in F$,
\[
    \partial\gamma_g^B(f\otimes x) = -if'\otimes\gamma_g^F(x).
\]
Since $S_3$ is finite, spectral regularity follows from the observation at the beginning of this subsection.

Let $\ell: S_3 \rightarrow \mathbb R$ be the word length associated with the generators $(12)$ and $(23)$. 
It is proper and translation bounded, so Theorem~\ref{thm:main} yields a spectral triple on $C_r^*(\B)$.

This gives a saturated Fell bundle over the non-Abelian group $S_3$ which is not of twisted crossed-product type: the homogeneous fibre corresponding to $\tau$ is not free of rank one over the unit fibre.
The nontrivial projection $p_B(\tau)$ makes this projective-module structure explicit in the factor-system description.

\subsection{A free-group grading with nonfree fibres}
\label{sec:free-group-example}

We construct a saturated Fell bundle over a non-Abelian free group with infinitely many pairwise nonisomorphic homogeneous modules.
Its spectral regularity follows directly from the Leibniz rule for tensor-product connections.

Let
\[
    M := S^2 \times S^1,
    \qquad
    B := C(M),
    \qquad
    B_0 := C^\infty(M),
\]
and equip $M$ with its product Riemannian metric and spin structure.
Write $\mathcal S$ for the spinor bundle and let
\[
    \mathbb D_B := (B_0,\pi_B,L^2(M,\mathcal S),D_M)
\]
be the corresponding odd spin spectral triple.

Let $L_0 \to S^2$ be a Hermitian line bundle of degree one, and let $L \to M$ be its pullback.
For $k \in \Z$, write $L^k$ for its $k$th tensor power, using the dual bundle when $k<0$.
In particular, $L^0$ is the trivial line bundle.

Set $G := F_2 =\langle a,b\rangle$ and let $n:G\to\Z$ be the homomorphism determined by $n(a)=1$ and $n(b)=0$.

For each $g \in G$, define $B_g := \Gamma(M,L^{n(g)})$.
The canonical tensor-product identifications
\[
    L^{n(g)} \otimes L^{n(h)} \cong L^{n(gh)}
\]
and Hermitian duality define multiplication and involution.
With the supremum norms, these operations make
$\B := (B_g)_{g\in G}$ a saturated Fell bundle with unit fibre $B$.

This Fell bundle is the pullback along $n:F_2 \to \Z$ of the $\Z$-graded Fell bundle associated with the line bundle $L$, whose spectral geometry fits into the generalized
crossed-product framework of~\cite{GaGr13}.

The modules $B_{a^k}$, $k \in \Z$, are pairwise nonisomorphic, since their associated line bundles have first Chern classes $k\,c_1(L)$.
In particular, $B_a$ is not a free right $B$-module.
Therefore $\B$ does not arise from a twisted action of $G$ on $B$.

Choose a smooth Hermitian projective realization of $L$ and equip it with the associated Grassmann connection $\nabla$.
Taking tensor products and duals gives compatible projective realizations of all $L^k$, whose Grassmann connections
agree with the induced tensor-product and dual connections $\nabla^k$.
These realizations and the canonical tensor-product identifications determine factor-system data $(\gamma_g,\omega(g,h))_{g,h \in G}$ that are smooth relative to $B_0=C^\infty(M)$.
The smooth homogeneous subspaces are given by
\[
    B_g^0 := \Gamma^\infty(M,L^{n(g)}),
    \qquad g \in G.
\]
For each $g \in G$, the natural identification
\[
    B_g \otimes_B L^2(M,\mathcal S)
    \cong L^2(M,L^{n(g)} \otimes \mathcal S)
\]
identifies the compressed operator used in the fibrewise construction with the spin Dirac operator twisted by
$\nabla^{n(g)}$, denoted by $D_g$.

Let $g,h \in G$, let $\xi \in B_g^0$, and let $T_\xi^h$ be multiplication by $\xi$.
The tensor-product~connections are compatible with multiplication, so the Leibniz rule gives
\[
    (D_{gh} T_\xi^h - T_\xi^h D_h)u = c(\nabla^{n(g)} \xi)u
\]
for every smooth section
$u$ of $L^{n(h)}\otimes\mathcal S$.
Here $c$ denotes Clifford multiplication on the spinor factor,
combined with multiplication of the line-bundle factors.
Consequently,
\[
    \|D_{gh} T_\xi^h - T_\xi^h D_h\|
    \leq
    \|c (\nabla^{n(g)}\xi)\|_\infty,
\]
where the right-hand side is independent of $h$.
Since smooth sections form a core for each twisted Dirac
operator, the same identity implies
\[
    T_\xi^h\dom(D_h)\subseteq\dom(D_{gh}).
\]
Thus the chosen factor-system data are spectrally regular.

A signed length function is chosen next.
For a nonempty reduced word $w \in F_2$, let $|w|$ denote its word length and put
\[
    \varepsilon(w):=
    \begin{cases}
        1, & \text{if the last letter of $w$ is $a$ or $b$},\\
        -1, & \text{if the last letter of $w$ is $a^{-1}$ or $b^{-1}$}.
    \end{cases}
\]
Define $\ell$ by $\ell(e) := 0$ and $\ell(w) := \varepsilon(w)|w|$ for all $w \neq e$.
Then $|\ell(w)| = |w|$, so $\ell$ is proper.
Moreover,
\[
    |\ell(gw) - \ell(w)| \leq |g|
    \qquad
    g,w \in F_2.
\]
To see this, if a nonempty suffix of $w$ survives reduction of $gw$, then the last letter is unchanged and the estimate follows from the usual word-length inequality.
Otherwise, all of $w$ cancels and $|gw| + |w| = |g|$, which gives the same estimate.
Hence $\ell$ is translation bounded.

Theorem~\ref{thm:main}, with $V=\C$, therefore gives a spectral triple on $C_r^*(\B)$ with Dirac operator
\[
    D_{\B,\ell} = D_{\mathrm{fib}} \otimes \sigma_1 + M_\ell \otimes \sigma_2.
\]

\begin{remark}
This example also separates spectral regularity from the sufficient conditions of Theorem~\ref{thm:factor-criterion}.
Indeed, for each $g \in G$, $\gamma_g(1) = p(g)$, so that criterion would require
\[
    \sup_{g \in G} \|[D_M,p(g)]\| < \infty.
\]
For each $k \in \Z$, the projection $p(a^k)$ represents the line bundle $L^k$.
A uniform bound on these Dirac commutators would yield uniform bounds on the derivatives of the projections and hence on the curvatures of their Grassmann connections.
Writing
\[
    F_k = p(a^k)(dp(a^k))^2
\]
for the corresponding curvature, the rank-one property implies that $\operatorname{Tr}(F_k)$ is uniformly bounded,
independently of the ambient matrix dimensions.
Consequently, the Chern numbers on $S^2 \times \{1\}$ would be uniformly bounded as $k$ varies over $\Z$.
This contradicts
\[
    \left|
    \frac{1}{2\pi i}
    \int_{S^2 \times \{1\}} \operatorname{Tr}(F_k)
    \right|
    = |k|,
    \qquad k \in \Z.
\]
Thus spectral regularity holds, although the stronger factor-system criterion does not.
\end{remark}

\subsection{Free compact Abelian actions}

Let $K$ be a compact metrizable Abelian group acting freely on a unital C\Star-algebra $A$.
Then $\widehat K$ is countable. 
The spectral subspaces $(A_\chi)_{\chi \in \widehat K}$ form a saturated Fell bundle with unit fibre $B = A^K$, and its reduced cross-sectional algebra is canonically isomorphic to $A$.

Unlike in the finite case, smoothness alone no longer guarantees the uniform estimates required for spectral regularity. 
Hence Theorem~\ref{thm:main} applies whenever the associated factor-system data are smooth and spectrally regular relative to a spectral triple on $B$.

This recasts the construction of~\cite{SchWa20} in the framework of saturated Fell bundles. 
The fibre operators $D_\chi$, $\chi \in \widehat K$, and hence $ D_{\mathrm{fib}}$, describe the geometry inherited from the fixed-point algebra $B$, while the length operator describes the direction of the dual group $\widehat K$.

\subsubsection{Quantum 4-tori}

A concrete comparison is provided by the quantum $4$-torus considered in~\cite[Sec.~7]{SchWa20}. 
Let $\mathbb A_\theta^4$ be the quantum $4$-torus and consider the restricted gauge~action
\[
    \alpha: \mathbb T^2 \rightarrow \operatorname{Aut}(\mathbb A_\theta^4)
\]
which fixes the first two canonical generators and acts on the remaining two coordinate directions. 
Its fixed-point algebra is the quantum $2$-torus $\mathbb A_{\theta'}^2$, while the spectral subspaces are indexed by $\widehat{\mathbb T}^2 \cong \Z^2$.
In~\cite{SchWa20}, the canonical Dirac operator on $\mathbb A_{\theta'}^2$ is lifted horizontally and combined with the vertical operator
\[
    D_v = \partial_1^{\mathbb T^2}\otimes\sigma_1 + \partial_2^{\mathbb T^2}\otimes\sigma_2.
\]
Under the Fourier identification $L^2(\mathbb T^2) \cong \ell^2(\Z^2)$, the operator $D_v$ is identified with multiplication by the matrix-valued function $\ell: \Z^2 \rightarrow \cL(\C^2)_{\mathrm{sa}}$, $\ell(m,n) := m\sigma_1 + n\sigma_2$.
It is a matrix-valued length function, because it is translation bounded and
\[
    \ell(m,n)^2 = (m^2+n^2)\id_{\C^2},
\]
which proves properness.
The associated length operator $M_\ell$ is therefore unitarily equivalent to $D_v$, so the Fell-bundle construction retains both Clifford directions appearing in~\cite{SchWa20}.

\subsubsection{The quantum Hopf fibration}

A different feature of the Fell-bundle formulation is illustrated by the quantum Hopf fibration. Consider the standard circle action
\[
    \beta: \mathbb T \rightarrow \operatorname{Aut}(C(SU_q(2)))
\]
whose fixed-point algebra is the standard Podleś sphere $B = C(S_q^2)$.
The spectral subspaces $(A_n)_{n \in\Z}$ form a saturated Fell bundle with unit fibre $B$, and its reduced cross-sectional algebra is canonically isomorphic to $C(SU_q(2))$. 
The fibres $A_n$, $n \in \Z$, are the quantum line bundles associated with the quantum Hopf fibration. 
As right $B$-modules they are finitely generated projective and, in general, nonfree. 
This example therefore exhibits a feature that is absent from the twisted crossed-product situation, where the homogeneous fibres are free right $B$-modules of rank one.

The quantum Hopf fibration and its spectral geometry have also been investigated by Senior~\cite{Sen11}, who studied the extension of a modular spectral triple
on the Podle\'s sphere to $SU_q(2)$ via an unbounded Kasparov product. 
In that setting, the resulting operator fails to have bounded commutators with the algebra of $SU_q(2)$.

Given a spectral triple $\mathbb D_B$ on $C(S_q^2)$ for which the associated factor-system data are smooth and spectrally regular, Theorem~\ref{thm:main} yields a spectral triple on $C(SU_q(2))$.
Its vertical part is determined by a matrix-valued length function on $\Z$.
Senior's construction highlights the analytic difficulties that may arise when extending spectral triples from the Podle\'s sphere to $SU_q(2)$.
In the present setting, the smoothness and spectral regularity assumptions remain to be verified for specific choices of Dirac operator on the Podle\'s sphere. 
We therefore make no identification with a standard Dirac operator on $SU_q(2)$.
Nevertheless, the example illustrates how the Fell-bundle framework accommodates nontrivial projective spectral subspaces.

\subsection{The gauge grading of the Cuntz algebras}

Let $N \geq 2$, and let $\mathcal O_N$ be the Cuntz algebra generated by
isometries $S_1,\ldots,S_N$. 
The gauge action
\[
    \alpha: \mathbb T \rightarrow \operatorname{Aut}(\mathcal O_N),
    \qquad
    \alpha_z(S_j) = z S_j,
\]
gives rise to the Fell bundle $\B=(A_n)_{n \in \Z}$, where
\[
    A_n
    =
    \overline{\operatorname{span}}
    \{ S_\mu S_\nu^*: |\mu|-|\nu| = n\}.
\]
Its unit fibre is the fixed-point algebra
\[
    B := A_0 \cong \varinjlim M_{N^k}(\C).
\]

The Fell bundle $\B$ is saturated. 
Indeed, for each $n \geq 1$ and every word $\mu$ of length $n$, the Cuntz relations give $S_\mu^* S_\mu = 1_B$, and therefore $1_B \in A_{-n} A_n$.
Moreover,
\[
    \sum_{|\mu|=n}S_\mu S_\mu^*=1_B,
\]
so that $1_B \in A_n A_{-n}$.
Hence $\overline{A_{-n} A_n} = \overline{A_n A_{-n}} = B$ for every $n \geq 1$, and the characterization of saturation from Section~\ref{sec:isometric-factor-systems} shows that $\B$ is saturated.
This also follows from the graph description of $\mathcal O_N$: it is the graph C\Star-algebra associated with the graph consisting of one vertex and $N$ loops, whose gauge action is free.

The factor-system realizations of the positive and negative fibres have rather different forms. 
For $n \geq 1$, choose $H_n := \C^{N^n}$, with orthonormal basis $(e_\mu)_{|\mu|=n}$ indexed by words of length $n$, and
\[
    s(n) := \sum_{|\mu|=n} S_\mu^* \otimes e_\mu \in A_{-n} \otimes H_n.
\]
Then
\[
    s(n)^ *s(n) = \sum_{|\mu|=n}S_\mu S_\mu^* = 1_B,
\]
so that $s(n)$ is an isometry. 
Moreover,
\[
    p(n) := s(n) s(n)^* = 1_B \otimes \id,
\]
because $S_\mu^* S_\nu = \delta_{\mu,\nu} 1_B$ for words $\mu,\nu$ of the same length. 
Consequently,
\[
    A_n
    \cong
    B\otimes H_n
    \cong
    B^{N^n}
\]
as right Hilbert $B$-modules.

For the components of negative degree, choose, for each $n \geq 1$, $H_{-n} = \C$ and $s(-n) := S_1^n \in A_n$.
Since $S_1^{*n} S_1^n = 1_B$, $s(-n)$ is an isometry, while
\[
    p(-n) := s(-n) s(-n)^* = S_1^nS_1^{*n}.
\]
Thus $A_{-n} \cong p(-n)B$ as right Hilbert $B$-modules. 
In particular, the two directions of the grading exhibit markedly different projective-module realizations:
\[
    A_n \cong B^{N^n},
    \qquad
    A_{-n} \cong S_1^nS_1^{*n}B,
    \qquad
    n \geq 1.
\]
The positive-degree fibres therefore have exponentially increasing free
rank, whereas the negative-degree fibres are represented by nontrivial
projections in $B$.

Let $\mathbb D_B=(B_0,\pi_B,\Hh_B,D_B)$ be a spectral triple on $B$, and suppose that the factor-system data determined by the above isometries are smooth relative to $B_0$.
For $n \geq 1$, the projection $p(n)$ is the identity, and the fibre operator $D_n$ constructed in Section~\ref{sec:lifts} is unitarily equivalent to $D_B \otimes \id$.
Consequently, if the base triple is $p$-summable, then, for each $n \geq 1$,
\[
    \operatorname{Tr}
    ((\id + D_n^2)^{-p/2}) = N^n \operatorname{Tr}((\id + D_B^2)^{-p/2}).
\]

Hence the uniform fibrewise trace estimate in Proposition~\ref{prop:summability} fails exponentially along the positive-degree fibres. 
This failure reflects an actual obstruction to finite summability for $\mathcal O_N$, rather than merely a limitation of the criterion. 
Connes proved that the existence of a finitely summable unbounded Fredholm module over a unital C\Star-algebra implies the existence of a tracial state; see~\cite{Co89}. 
The Cuntz algebra $\mathcal O_N$ has no tracial state. 
Indeed, if $\tau$ were tracial, then
\[
    1 = \tau(1) = \sum_{j=1}^N \tau(S_j S_j^*) = \sum_{j=1}^N \tau(S_j^* S_j) = N,
\]
which is impossible for $N \geq 2$.

For the negative fibres, the corresponding operators are compressions of $D_B$ by the projections $p(-n) = S_1^n S_1^{*n}$.
Thus their summability behaviour is governed by the interaction between $D_B$ and this family of projections rather than by an increasing finite-dimensional multiplicity. 

The example therefore illustrates that the fibrewise geometry of a saturated Fell bundle can be strongly asymmetric even when the grading group is $\Z$.

Finally, saturation alone does not ensure that the above factor-system data are spectrally regular relative to $\mathbb D_B$.
To apply Theorem~\ref{thm:main}, spectral regularity must still be established for the chosen spectral triple on $B$.
The sufficient conditions of Theorem~\ref{thm:factor-criterion} provide one way of doing so.

\section*{Acknowledgements}

The author would like to thank Magnus Goffeng for pointing out the connection with Ada Masters' work and for helpful insights concerning the interpretation of the construction in terms of unbounded Kasparov products. 
The author is also grateful to Ada Masters for valuable discussions and for clarifying the relationship with her work on Fell bundles.

\bibliographystyle{abbrv}
\bibliography{short,SAFB}

\end{document}